\documentclass[a4paper,12pt]{article}

\usepackage[a4paper,margin=27mm]{geometry}

\usepackage{amscd,amsmath,amssymb,amsthm}
\usepackage{graphicx}

\newtheorem{theorem}{Theorem}[section]

\newtheorem{proposition}[theorem]{Proposition}
\newtheorem{corollary}[theorem]{Corollary}

\theoremstyle{definition}
\newtheorem{definition}[theorem]{Definition}
\newtheorem{example}[theorem]{Example}

\theoremstyle{remark}

\begin{document}

\title{Loop Fundamental Quandles:
A Geometric Generalization of the Knot Quandle}

\author{
Seiichi Kamada\\
Department of Mathematics, Graduate School of Science,\\
The University of Osaka\\
Toyonaka, Osaka 560-0043, Japan\\
\texttt{kamada@math.sci.osaka-u.ac.jp}
}

\date{}

\maketitle

\begin{abstract}
The knot quandle, introduced independently by Joyce and Matveev,
admits a geometric description in terms of nooses.
Motivated by this description, we introduce the
\textit{loop fundamental quandle}
$LQ(X,A,\ast)$ of a pair $(X,A)$ with a fixed base point
$\ast\in X$. 
Its elements are represented by pairs consisting of a loop in $A$
and a path joining the base point of the loop to the fixed base
point $\ast$.
More generally, for a loop condition $P$ on $A$, invariant under
free homotopy, we define the loop fundamental quandle
$LQ(X,A,\ast;P)$.

We establish functoriality and homotopy invariance of the loop
fundamental quandle and show that it admits a natural augmented
quandle structure over $\pi_1(X,\ast)$.
For a knot $K$ in $S^3$, applying the construction to its exterior
gives the \textit{knot loop quandle} $LQ(K,\ast;P)$.
Appropriate choices of the loop condition recover the knot quandle,
its $n$-th power quandles, and the full knot quandle.
The construction is inspired by Joyce's noose-theoretic approach,
but allows loop data that need not arise as the boundary of a disk
satisfying the noose condition.
\end{abstract}

\medskip

\noindent
\textbf{2020 Mathematics Subject Classification.}
Primary 57K12; Secondary 57K10, 55Q05.

\smallskip

\noindent
\textbf{Keywords.}
quandle, knot quandle, loop fundamental quandle, knot loop quandle,
augmented quandle, noose.


\section{Introduction}\label{sec:introduction}

The knot quandle, introduced independently by Joyce
\cite{Joyce82} and Matveev \cite{Matveev82}, is a powerful
invariant of knots.
Joyce gave both a combinatorial description of the knot quandle
based on knot diagrams and a geometric description in terms of
nooses.
He also introduced the fundamental quandle
$Q(X,A,\ast)$ of a pair consisting of a topological space $X$ and
a subspace $A$, with a fixed base point $\ast$.

The purpose of this paper is to introduce another quandle
associated with a pair of topological spaces.
Our construction is motivated by the geometric picture underlying
Joyce's noose construction.
Instead of taking a disk together with its tail as the basic
geometric object, we consider a pair
\[
(\ell,\alpha),
\]
where $\ell$ is a loop in $A$ and $\alpha$ is a path in $X$ from
the base point of $\ell$ to the fixed base point $\ast$.
After passing to an appropriate homotopy relation, the quandle
operation is given by
\[
[\ell_1,\alpha_1]\rhd[\ell_2,\alpha_2]
=
[\ell_1,
\alpha_1\alpha_2^{-1}\ell_2\alpha_2].
\]
We call the resulting quandle the
\textit{loop fundamental quandle} of $(X,A)$ based at $\ast$ and
denote it by
\[
LQ(X,A,\ast).
\]

A useful feature of this construction is that one can impose a
condition on the loop component.
More precisely, if $P$ is a set of loops in $A$ invariant under
free homotopy, which we call a \textit{loop condition}, we obtain
a quandle
\[
LQ(X,A,\ast;P).
\]
The freedom to choose $P$ plays an important role in the
applications to knot theory considered in this paper.

We also study some basic structural properties of the loop
fundamental quandle.
It is functorial with respect to maps of based pairs, and homotopic
maps induce the same quandle homomorphism.
In particular, a homotopy equivalence of based pairs induces an
isomorphism of loop fundamental quandles.
We further show that the loop fundamental quandle has a natural
augmented quandle structure associated with the fundamental group
$\pi_1(X,\ast)$.

For a knot $K$ in $S^3$, let
\[
E(K)=S^3\setminus\operatorname{Int}N(K)
\]
be its exterior and let
\[
A_K=\partial E(K).
\]
For a loop condition $P$ on $A_K$, we define the
\textit{knot loop quandle}
\[
LQ(K,\ast;P)
=
LQ(E(K),A_K,\ast;P).
\]
When $K$ is oriented and $P_1$ is the loop condition consisting
of loops freely homotopic to a positive meridian, the knot loop
quandle $LQ(K,\ast;P_1)$ is naturally isomorphic to the usual knot
quandle $Q(K,\ast)$.
More generally, if $P_n$ consists of loops freely homotopic to the
$n$-th power of a positive meridian, then
\[
LQ(K,\ast;P_n)
\cong
Q(K,\ast)^{[n]},
\]
where $Q(K,\ast)^{[n]}$ denotes the $n$-th power quandle of the
knot quandle.
More general loop conditions give further knot loop quandles; in
particular, the loop condition corresponding to
\[
\{(1,0),(-1,0)\}\subset\mathbb Z^2
\]
gives the full knot quandle.

One distinction between the present construction and Joyce's
noose construction is that a loop used in a loop fundamental
quandle need not arise as the boundary of a disk satisfying the
noose condition.
Thus the loop fundamental quandle provides a flexible extension
of the geometric viewpoint underlying the knot quandle.
This also suggests analogous constructions for codimension-two
submanifolds in more general manifolds.

The paper is organized as follows.
In Section~\ref{sec:loop-fundamental-quandle}, we define the loop
fundamental quandle, introduce loop conditions, and establish the
basic quandle properties.
In Section~\ref{sec:functoriality}, we study functoriality,
homotopy invariance, and invariance under homotopy equivalence.
In Section~\ref{sec:augmented-quandle}, we show that the loop
fundamental quandle carries a natural augmented quandle structure.
In Section~\ref{sec:joyce-knot-quandle}, we recall Joyce's
fundamental quandle and the knot quandle in terms of nooses.
In Section~\ref{sec:knot-loop-quandle}, we introduce the knot loop
quandle and relate it to the knot quandle, its power quandles, and
the full knot quandle.
Finally, in Section~\ref{sec:concluding-remarks}, we compare the
present construction with Joyce's noose construction and discuss
possible extensions to codimension-two submanifolds.


\section{The loop fundamental quandle}
\label{sec:loop-fundamental-quandle}

We begin by recalling the definition of a quandle.

\begin{definition}
A \textit{quandle} is a set $Q$ equipped with two binary operations
$\rhd$ and $\overline{\rhd}$ satisfying the following conditions:
\begin{enumerate}
\item[(i)]
For any $a \in Q$,
\[
a \rhd a = a.
\]

\item[(ii)]
For any $a,b \in Q$,
\[
(a \rhd b)\,\overline{\rhd}\,b = a
\quad\text{and}\quad
(a\,\overline{\rhd}\,b)\rhd b = a.
\]

\item[(iii)]
For any $a,b,c \in Q$,
\[
(a \rhd b)\rhd c
=
(a \rhd c)\rhd(b \rhd c).
\]
\end{enumerate}
\end{definition}

The operation $\overline{\rhd}$ is uniquely determined by $\rhd$.
Thus, a quandle may be regarded either as a triple
$(Q,\rhd,\overline{\rhd})$ or simply as a pair $(Q,\rhd)$.
We often denote the quandle $(Q,\rhd,\overline{\rhd})$ simply by $Q$.

Let $(X,A)$ be a pair consisting of a topological space $X$ and
a subspace $A$ of $X$.
Fix a base point $\ast \in X$; we do not require that $\ast$ belong
to $A$.
Let $I=[0,1]$.

Throughout this paper, if $\alpha$ and $\beta$ are paths satisfying
$\alpha(1)=\beta(0)$, then $\alpha\beta$ denotes their concatenation,
traversed first along $\alpha$ and then along $\beta$.
For a path $\alpha$, we denote by $\alpha^{-1}$ the path with the
opposite orientation, defined by $\alpha^{-1}(t)=\alpha(1-t)$.

Let $L\Pi(X,A,\ast)$ denote the set of all pairs $(\ell,\alpha)$,
where
\[
\ell:I\to A
\quad\text{and}\quad
\alpha:I\to X
\]
are paths satisfying
\[
\ell(0)=\ell(1)=\alpha(0)
\quad\text{and}\quad
\alpha(1)=\ast.
\]
Thus, $\ell$ is a loop in $A$, and $\alpha$ is a path in $X$
from the base point of $\ell$ to the fixed base point $\ast$ of $X$.

We introduce an equivalence relation on $L\Pi(X,A,\ast)$ as follows.
Two elements $(\ell,\alpha)$ and $(\ell',\alpha')$ are said to be
\textit{homotopic} if there exist homotopies
\[
H:I\times I\to A
\quad\text{and}\quad
K:I\times I\to X
\]
such that
\[
H(t,0)=\ell(t),\qquad
H(t,1)=\ell'(t),
\]
\[
K(t,0)=\alpha(t),\qquad
K(t,1)=\alpha'(t),
\]
and, for every $s\in I$,
\[
H(0,s)=H(1,s)=K(0,s)
\quad\text{and}\quad
K(1,s)=\ast.
\]
In other words, the pair
$(H(\,\cdot\,,s),K(\,\cdot\,,s))$ belongs to
$L\Pi(X,A,\ast)$ for every $s\in I$.

We denote the equivalence class of $(\ell,\alpha)$ by
\[
[(\ell,\alpha)]
\quad\text{or simply}\quad
[\ell,\alpha],
\]
and let
\[
L\pi(X,A,\ast)
=
L\Pi(X,A,\ast)/{\sim}
\]
be the set of equivalence classes.

We now define two binary operations on $L\Pi(X,A,\ast)$.
Let
\[
(\ell_1,\alpha_1),(\ell_2,\alpha_2)\in L\Pi(X,A,\ast).
\]
We set
\[
(\ell_1,\alpha_1)\rhd(\ell_2,\alpha_2)
=
\bigl(
\ell_1,
\alpha_1\alpha_2^{-1}\ell_2\alpha_2
\bigr)
\]
and
\[
(\ell_1,\alpha_1)\,\overline{\rhd}\,(\ell_2,\alpha_2)
=
\bigl(
\ell_1,
\alpha_1\alpha_2^{-1}\ell_2^{-1}\alpha_2
\bigr).
\]

We shall show that these operations are compatible with the above
equivalence relation and hence induce well-defined binary operations
on $L\pi(X,A,\ast)$, given by
\[
[\ell_1,\alpha_1]\rhd[\ell_2,\alpha_2]
=
[\ell_1,\alpha_1\alpha_2^{-1}\ell_2\alpha_2]
\]
and
\[
[\ell_1,\alpha_1]\,\overline{\rhd}\,[\ell_2,\alpha_2]
=
[\ell_1,\alpha_1\alpha_2^{-1}\ell_2^{-1}\alpha_2].
\]

\begin{theorem}\label{theorem:mainA}
The two operations defined above are well defined on
$L\pi(X,A,\ast)$, and
$L\pi(X,A,\ast)$, equipped with the operations
$\rhd$ and $\overline{\rhd}$, is a quandle.
\end{theorem}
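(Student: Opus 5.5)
The plan is to reduce everything to one basic fact: the class $[\ell,\alpha]$ depends on $\alpha$ only up to homotopy rel endpoints once $\ell$ is fixed, and a homotopy of pairs lets us move the base point of $\ell$ along $\alpha$. I would first record two lemmas. \emph{Lemma 1:} if $\alpha\simeq\alpha'$ rel $\partial I$, then $(\ell,\alpha)\sim(\ell,\alpha')$. This is immediate: take $H$ constant and $K$ the given homotopy. \emph{Lemma 2:} if $(\ell,\alpha)\sim(\ell',\alpha')$ via $(H,K)$, then the loops $\alpha^{-1}\ell\alpha$ and $\alpha'^{-1}\ell'\alpha'$ based at $\ast$ are homotopic rel $\partial I$ in $X$. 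For this, glue $K^{-1}$, $H$, $K$ side by side in the $t$-direction to get a map $I\times I\to X$ whose vertical sides stay at $\ast$. In other words, $g(\ell,\alpha):=[\alpha^{-1}\ell\alpha]\in\pi_1(X,\ast)$ is an invariant of the class.

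Well-definedness then proceeds in two steps. Independence of the second argument follows from Lemma~2: the path $\alpha_1\alpha_2^{-1}\ell_2\alpha_2$ changes only by a homotopy rel endpoints, so Lemma~1 applies. Independence of the first argument comes from taking a homotopy $(H,K)$ from $(\ell_1,\alpha_1)$ to $(\ell_1',\alpha_1')$ and using $(H,\,K(\cdot,s)\cdot\gamma)$ with $\gamma=\alpha_2^{-1}\ell_2\alpha_2$ fixed. This is legitimate because the concatenation varies continuously in $s$ and ends at $\ast$. The same arguments, with $\ell_2^{-1}$ in place of $\ell_2$, handle $\overline{\rhd}$.

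For the axioms, it is convenient to write $[\ell,\alpha]\cdot g=[\ell,\alpha\gamma]$ for $g=[\gamma]\in\pi_1(X,\ast)$. This is a well-defined right action, by Lemma~1 and the first-argument argument above. With this notation, $a\rhd b=a\cdot g(b)$ and $a\,\overline{\rhd}\,b=a\cdot g(b)^{-1}$. Axiom (ii) is then immediate. Axiom (i) needs $(\ell,\alpha\alpha^{-1}\ell\alpha)\sim(\ell,\alpha)$, and after Lemma~1 this amounts to $(\ell,\ell\alpha)\sim(\ell,\alpha)$. This is the main geometric step. I would use the rotation homotopy $H(t,s)=\ell(t+s \bmod 1)$, defined via $\ell$ as a map of the circle, together with $K(\cdot,s)=\ell|_{[s,1]}\alpha$ suitably reparametrized. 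At $s=0$ this gives $(\ell,\ell\alpha)$, and at $s=1$ it gives $(\ell,\alpha)$, so the base point travels once around $\ell$.

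Axiom (iii) reduces to the identity $g(a\cdot h)=h^{-1}g(a)h$, which is clear from the definition of $g$. Using it,
\[
(a\rhd c)\rhd(b\rhd c)=a\,g(c)\,g(c)^{-1}g(b)g(c)=a\,g(b)g(c)=(a\rhd b)\rhd c .
\]
The main care is needed in continuity and reparametrization details of the rotation homotopy in (i), and in checking that all constructed homotopies keep $H$ in $A$. This holds since only $K$ is ever modified, except in the rotation, where $H$ takes values in $\ell(I)\subset A$.
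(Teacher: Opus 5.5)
Your proof is correct, but it is organized differently from the paper's. The paper proves well-definedness in one step, deforming both arguments simultaneously through the family $\bigl(\ell_{1,s},\alpha_{1,s}\alpha_{2,s}^{-1}\ell_{2,s}\alpha_{2,s}\bigr)$. The second component of that family is just your first-argument homotopy concatenated with the based homotopy of your Lemma~2. The paper then checks axioms (i)--(iii) by direct cancellation of concatenated paths, without introducing any group action.

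You instead isolate the invariant $g(\ell,\alpha)=[\alpha^{-1}\ell\alpha]\in\pi_1(X,\ast)$ together with the right action of $\pi_1(X,\ast)$. This reduces the axioms to the two identities $a\cdot g(a)=a$ and $g(a\cdot h)=h^{-1}g(a)h$. That is exactly the augmented-quandle structure the paper establishes only later, in Proposition~\ref{prop:interpretation-as-augmented-quandle} (your $g$ is its $\varepsilon$), combined with the general principle of Proposition~\ref{prop:def-augmented-quandle}. In effect you prove this theorem and that proposition together.

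Your route turns axiom (iii) into a one-line computation in $\pi_1(X,\ast)$ and gives the augmented structure for free. The cost is checking that the action is well defined and genuinely an action (both via Lemma~1, as you indicate). The paper's direct route is shorter for this theorem taken in isolation.

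The one genuinely geometric step, $(\ell,\ell\alpha)\sim(\ell,\alpha)$, is the same in both proofs. Your rotation homotopy $H(t,s)=\ell(t+s \bmod 1)$ makes explicit what the paper only describes in words. At $s=1$, a constant path followed by $\alpha$ is not literally $\alpha$. So either let the parameter interval of the $\ell$-segment shrink to length zero, or finish with Lemma~1.
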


\begin{definition}
Let $(X,A)$ be a pair consisting of a topological space $X$ and
a subspace $A$ of $X$, and let $\ast$ be a base point of $X$.
We call the quandle $(L\pi(X,A,\ast),\rhd,\overline{\rhd})$ the
\textit{loop fundamental quandle of $(X,A)$ based at $\ast$},
and denote it by
\[
LQ(X,A,\ast).
\]
\end{definition}

Let $L(A)$ denote the set of all loops in $A$.
Two loops $\ell$ and $\ell'$ in $A$ are said to be
\textit{freely homotopic} if they are homotopic through loops in $A$,
where the base point of the loop is allowed to move during the homotopy.

\begin{definition}
A subset $P$ of $L(A)$ is called a \textit{loop condition on $A$}
if it is invariant under free homotopy; that is, for any freely
homotopic loops $\ell,\ell'\in L(A)$,
\[
\ell\in P
\quad\Longrightarrow\quad
\ell'\in P.
\]
\end{definition}

Let $P$ be a loop condition on $A$.
Define a subset $L\Pi(X,A,\ast;P)$ of $L\Pi(X,A,\ast)$ by
\[
L\Pi(X,A,\ast;P)
=
\{
(\ell,\alpha)\in L\Pi(X,A,\ast)
\mid
\ell\in P
\}.
\]
If $(\ell,\alpha)$ and $(\ell',\alpha')$ are homotopic in
$L\Pi(X,A,\ast)$ and $\ell\in P$, then $\ell$ and $\ell'$ are
freely homotopic in $A$.
Hence $\ell'\in P$.
It follows that the subset $L\pi(X,A,\ast;P)$ of
$L\pi(X,A,\ast)$ given by
\[
L\pi(X,A,\ast;P)
=
\{
[\ell,\alpha]\in L\pi(X,A,\ast)
\mid
\ell\in P
\}
\]
is well defined.

\begin{theorem}\label{theorem:mainB}
The operations $\rhd$ and $\overline{\rhd}$ on
$L\pi(X,A,\ast)$ restrict to binary operations on
$L\pi(X,A,\ast;P)$.
Moreover,
$L\pi(X,A,\ast;P)$, equipped with these operations, is a quandle.
\end{theorem}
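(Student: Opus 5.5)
We will deduce Theorem~\ref{theorem:mainB} from Theorem~\ref{theorem:mainA}. Then the only new thing to check is closure of $L\pi(X,A,\ast;P)$ under the two operations. The quandle axioms (i)--(iii) are identities among elements of $L\pi(X,A,\ast)$, so they hold automatically in any subset closed under $\rhd$ and $\overline{\rhd}$. The point we must use here is that both operations leave the loop component of the first argument unchanged.

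The closure step goes as follows. Take $[\ell_1,\alpha_1],[\ell_2,\alpha_2]\in L\pi(X,A,\ast;P)$. By the remark preceding the theorem, membership is independent of the representative, so we may assume $\ell_1,\ell_2\in P$. By Theorem~\ref{theorem:mainA}, the products $[\ell_1,\alpha_1]\rhd[\ell_2,\alpha_2]$ and $[\ell_1,\alpha_1]\,\overline{\rhd}\,[\ell_2,\alpha_2]$ are well defined. They are represented by $(\ell_1,\alpha_1\alpha_2^{-1}\ell_2\alpha_2)$ and $(\ell_1,\alpha_1\alpha_2^{-1}\ell_2^{-1}\alpha_2)$ respectively. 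Both pairs lie in $L\Pi(X,A,\ast)$: the new path starts at $\alpha_1(0)=\ell_1(0)$ and ends at $\alpha_2(1)=\ast$, and each concatenation is legitimate because $\alpha_1(1)=\ast=\alpha_2^{-1}(0)$, $\alpha_2^{-1}(1)=\alpha_2(0)=\ell_2(0)$, and $\ell_2(1)=\alpha_2(0)$. Their loop component is $\ell_1\in P$, so both classes lie in $L\pi(X,A,\ast;P)$.

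Hence $\rhd$ and $\overline{\rhd}$ restrict to binary operations on $L\pi(X,A,\ast;P)$. Axioms (i), (ii) and (iii) hold there because they hold in $LQ(X,A,\ast)$ by Theorem~\ref{theorem:mainA}, and all expressions involved stay inside the subset. Therefore $L\pi(X,A,\ast;P)$ is a subquandle.

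There is essentially no obstacle here. The only subtlety is making sure membership in $L\pi(X,A,\ast;P)$ is independent of the representative. This was already established before the statement: a homotopy of pairs restricts to a free homotopy of loops in $A$, and $P$ is invariant under free homotopy. Note that the closure argument does not even need $\ell_2\in P$. It only uses the fact that the loop component of $\ell_1$ is preserved, so $L\pi(X,A,\ast;P)$ is in fact closed under the action of all of $LQ(X,A,\ast)$ from the right.
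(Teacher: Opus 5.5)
Your proposal is correct and follows the paper's proof essentially verbatim: you invoke Theorem~\ref{theorem:mainA}, observe that both operations keep the loop component $\ell_1\in P$ of the first argument, and conclude that $L\pi(X,A,\ast;P)$ is a subquandle. Your extra remark is also correct: $\ell_2\in P$ is never used, so the subset is closed under $\rhd$ and $\overline{\rhd}$ by arbitrary elements of $LQ(X,A,\ast)$.
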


\begin{definition}
Let $(X,A)$ be a pair consisting of a topological space $X$ and
a subspace $A$ of $X$, let $\ast$ be a base point of $X$, and let
$P$ be a loop condition on $A$.
We call the quandle $(L\pi(X,A,\ast;P),\rhd,\overline{\rhd})$ the
\textit{loop fundamental quandle of $(X,A)$ based at $\ast$
with loop condition $P$}, and denote it by
\[
LQ(X,A,\ast;P).
\]
Thus,
\[
LQ(X,A,\ast;P)
=
\bigl(
L\pi(X,A,\ast;P),
\rhd,
\overline{\rhd}
\bigr).
\]
\end{definition}

We now prove Theorems~\ref{theorem:mainA} and~\ref{theorem:mainB}.

\begin{proof}[Proof of Theorem~\ref{theorem:mainA}]
First, we show that $\rhd$ and $\overline{\rhd}$ are well-defined
operations on $L\pi(X,A,\ast)$.

Suppose that
\[
(\ell_i,\alpha_i)\sim(\ell_i',\alpha_i')
\qquad (i=1,2).
\]
We first show that
\[
(\ell_1,\alpha_1)\rhd(\ell_2,\alpha_2)
\sim
(\ell_1',\alpha_1')\rhd(\ell_2',\alpha_2').
\]

For $i=1,2$, let
\[
H_i:I\times I\to A
\quad\text{and}\quad
K_i:I\times I\to X
\]
be homotopies such that
\[
H_i(t,0)=\ell_i(t),\qquad
H_i(t,1)=\ell_i'(t),
\]
\[
K_i(t,0)=\alpha_i(t),\qquad
K_i(t,1)=\alpha_i'(t),
\]
and, for every $s\in I$,
\[
H_i(0,s)=H_i(1,s)=K_i(0,s)
\quad\text{and}\quad
K_i(1,s)=\ast.
\]
In other words, the pair
\[
\bigl(H_i(\,\cdot\,,s),K_i(\,\cdot\,,s)\bigr)
\]
belongs to $L\Pi(X,A,\ast)$ for every $s\in I$.

For each $s\in I$, put
\[
\ell_{i,s}=H_i(\,\cdot\,,s)
\quad\text{and}\quad
\alpha_{i,s}=K_i(\,\cdot\,,s)
\qquad (i=1,2).
\]
Then
\[
(\ell_{i,s},\alpha_{i,s})\in L\Pi(X,A,\ast)
\]
for every $s\in I$.

Consider the family
\[
\bigl(
\ell_{1,s},
\alpha_{1,s}\alpha_{2,s}^{-1}\ell_{2,s}\alpha_{2,s}
\bigr),
\qquad s\in I.
\]
For every $s\in I$, this pair belongs to $L\Pi(X,A,\ast)$.
Indeed, the initial point of the path
\[
\alpha_{1,s}\alpha_{2,s}^{-1}\ell_{2,s}\alpha_{2,s}
\]
is
\[
\alpha_{1,s}(0)=\ell_{1,s}(0),
\]
and its terminal point is $\ast$.

When $s=0$, this pair is
\[
\bigl(
\ell_1,
\alpha_1\alpha_2^{-1}\ell_2\alpha_2
\bigr)
=
(\ell_1,\alpha_1)\rhd(\ell_2,\alpha_2),
\]
whereas when $s=1$, it is
\[
\bigl(
\ell_1',
(\alpha_1')(\alpha_2')^{-1}\ell_2'\alpha_2'
\bigr)
=
(\ell_1',\alpha_1')\rhd(\ell_2',\alpha_2').
\]
Thus the above family gives a homotopy in $L\Pi(X,A,\ast)$ from
$(\ell_1,\alpha_1)\rhd(\ell_2,\alpha_2)$ to
$(\ell_1',\alpha_1')\rhd(\ell_2',\alpha_2')$.
Therefore
\[
(\ell_1,\alpha_1)\rhd(\ell_2,\alpha_2)
\sim
(\ell_1',\alpha_1')\rhd(\ell_2',\alpha_2').
\]

Similarly, the family
\[
\bigl(
\ell_{1,s},
\alpha_{1,s}\alpha_{2,s}^{-1}
\ell_{2,s}^{-1}\alpha_{2,s}
\bigr),
\qquad s\in I,
\]
gives a homotopy in $L\Pi(X,A,\ast)$ from
\[
(\ell_1,\alpha_1)\,\overline{\rhd}\,(\ell_2,\alpha_2)
\]
to
\[
(\ell_1',\alpha_1')\,\overline{\rhd}\,
(\ell_2',\alpha_2').
\]
Hence
\[
(\ell_1,\alpha_1)\,\overline{\rhd}\,(\ell_2,\alpha_2)
\sim
(\ell_1',\alpha_1')\,\overline{\rhd}\,
(\ell_2',\alpha_2').
\]

Thus both operations descend to well-defined binary operations on
$L\pi(X,A,\ast)$.

We now verify the quandle axioms.

\medskip
\noindent
{\rm (i)}
Let $[\ell,\alpha]\in L\pi(X,A,\ast)$. We have
\[
(\ell,\alpha)\rhd(\ell,\alpha)
=
(\ell,\alpha\alpha^{-1}\ell\alpha)
\sim
(\ell,\ell\alpha).
\]
Here and below, cancellation of a path followed by its inverse is
understood up to homotopy relative to the endpoints.

Moreover,
\[
(\ell,\ell\alpha)\sim(\ell,\alpha).
\]
Indeed, this homotopy is obtained by moving the base point of the
loop $\ell$ once around $\ell$, while shortening the initial copy
of $\ell$ in the path $\ell\alpha$ accordingly.
Therefore
\[
[\ell,\alpha]\rhd[\ell,\alpha]
=
[\ell,\alpha].
\]

\medskip
\noindent
{\rm (ii)}
For
$[\ell_1,\alpha_1],[\ell_2,\alpha_2]\in L\pi(X,A,\ast)$,
we have
\[
\begin{aligned}
&
\bigl(
(\ell_1,\alpha_1)\rhd(\ell_2,\alpha_2)
\bigr)
\,\overline{\rhd}\,
(\ell_2,\alpha_2)
\\
&\qquad =
\bigl(
\ell_1,
(\alpha_1\alpha_2^{-1}\ell_2\alpha_2)
\alpha_2^{-1}\ell_2^{-1}\alpha_2
\bigr)
\\
&\qquad \sim
(\ell_1,\alpha_1).
\end{aligned}
\]
Hence
\[
\bigl(
[\ell_1,\alpha_1]\rhd[\ell_2,\alpha_2]
\bigr)
\,\overline{\rhd}\,
[\ell_2,\alpha_2]
=
[\ell_1,\alpha_1].
\]
Similarly,
\[
\bigl(
[\ell_1,\alpha_1]\,\overline{\rhd}\,[\ell_2,\alpha_2]
\bigr)
\rhd
[\ell_2,\alpha_2]
=
[\ell_1,\alpha_1].
\]

\medskip
\noindent
{\rm (iii)}
Let
\[
x_i=[\ell_i,\alpha_i]
\qquad (i=1,2,3).
\]
We have
\[
\begin{aligned}
(x_1\rhd x_2)\rhd x_3
&=
\bigl[
\ell_1,
(\alpha_1\alpha_2^{-1}\ell_2\alpha_2)
\alpha_3^{-1}\ell_3\alpha_3
\bigr]
\\
&=
\bigl[
\ell_1,
\alpha_1\alpha_2^{-1}\ell_2\alpha_2
\alpha_3^{-1}\ell_3\alpha_3
\bigr].
\end{aligned}
\]

On the other hand,
\[
x_1\rhd x_3
=
[\ell_1,\alpha_1\alpha_3^{-1}\ell_3\alpha_3]
\]
and
\[
x_2\rhd x_3
=
[\ell_2,\alpha_2\alpha_3^{-1}\ell_3\alpha_3].
\]
Therefore
\[
\begin{aligned}
&(x_1\rhd x_3)\rhd(x_2\rhd x_3)
\\
&=
\Bigl[
\ell_1,
(\alpha_1\alpha_3^{-1}\ell_3\alpha_3)
(\alpha_2\alpha_3^{-1}\ell_3\alpha_3)^{-1}
\ell_2
(\alpha_2\alpha_3^{-1}\ell_3\alpha_3)
\Bigr]
\\
&=
\Bigl[
\ell_1,
\alpha_1\alpha_3^{-1}\ell_3\alpha_3
\alpha_3^{-1}\ell_3^{-1}\alpha_3\alpha_2^{-1}
\ell_2
\alpha_2\alpha_3^{-1}\ell_3\alpha_3
\Bigr]
\\
&=
\bigl[
\ell_1,
\alpha_1\alpha_2^{-1}\ell_2\alpha_2
\alpha_3^{-1}\ell_3\alpha_3
\bigr].
\end{aligned}
\]
Thus
\[
(x_1\rhd x_2)\rhd x_3
=
(x_1\rhd x_3)\rhd(x_2\rhd x_3).
\]

Hence $L\pi(X,A,\ast)$ satisfies all the quandle axioms.
\end{proof}

\begin{proof}[Proof of Theorem~\ref{theorem:mainB}]
By Theorem~\ref{theorem:mainA},
$L\pi(X,A,\ast)$ is a quandle with the operations
$\rhd$ and $\overline{\rhd}$.

Let
\[
[\ell_1,\alpha_1],
[\ell_2,\alpha_2]
\in
L\pi(X,A,\ast;P).
\]
Then
\[
\ell_1,\ell_2\in P.
\]
By definition,
\[
[\ell_1,\alpha_1]\rhd[\ell_2,\alpha_2]
=
[\ell_1,
\alpha_1\alpha_2^{-1}\ell_2\alpha_2].
\]
The first component of the representative on the right-hand side
is $\ell_1$, which belongs to $P$.
Hence
\[
[\ell_1,\alpha_1]\rhd[\ell_2,\alpha_2]
\in
L\pi(X,A,\ast;P).
\]

Similarly,
\[
[\ell_1,\alpha_1]\,\overline{\rhd}\,[\ell_2,\alpha_2]
=
[\ell_1,
\alpha_1\alpha_2^{-1}\ell_2^{-1}\alpha_2],
\]
and again the first component is $\ell_1\in P$.
Therefore
\[
[\ell_1,\alpha_1]\,\overline{\rhd}\,[\ell_2,\alpha_2]
\in
L\pi(X,A,\ast;P).
\]

Thus $L\pi(X,A,\ast;P)$ is closed under both operations
$\rhd$ and $\overline{\rhd}$, and hence is a subquandle of
$L\pi(X,A,\ast)$.
\end{proof}

We next consider a natural map obtained by forgetting the loop
component.

Let $(X,A)$ be a pair consisting of a topological space $X$ and a
subspace $A$ of $X$, and let $\ast$ be a base point of $X$.

We denote by $\Pi(X,A,\ast)$ the set of all paths
\[
\alpha:I\longrightarrow X
\]
such that
\[
\alpha(0)\in A
\quad\text{and}\quad
\alpha(1)=\ast.
\]
We say that two paths
$\alpha,\alpha'\in\Pi(X,A,\ast)$ are \textit{homotopic}
if there exists a homotopy
\[
K:I\times I\longrightarrow X
\]
such that
\[
K(t,0)=\alpha(t),
\qquad
K(t,1)=\alpha'(t),
\]
and, for every $s\in I$,
\[
K(0,s)\in A
\quad\text{and}\quad
K(1,s)=\ast.
\]
In other words, the path $K(\,\cdot\,,s)$ belongs to
$\Pi(X,A,\ast)$ for every $s\in I$.

We denote by $[\alpha]$ the equivalence class of $\alpha$ under
this homotopy and let
\[
\pi(X,A,\ast)
=
\Pi(X,A,\ast)/{\sim}
\]
be the set of equivalence classes.

Consider the projection onto the second component
\[
\varpi_\Pi:
L\Pi(X,A,\ast)
\longrightarrow
\Pi(X,A,\ast),
\qquad
\varpi_\Pi(\ell,\alpha)=\alpha.
\]
If
\[
(\ell,\alpha)\sim(\ell',\alpha')
\]
in $L\Pi(X,A,\ast)$, then
\[
\alpha\sim\alpha'
\]
in $\Pi(X,A,\ast)$.
Hence $\varpi_\Pi$ induces a well-defined map
\[
\varpi:
L\pi(X,A,\ast)
\longrightarrow
\pi(X,A,\ast),
\qquad
\varpi([\ell,\alpha])=[\alpha].
\]
We call $\varpi$ the \textit{loop-forgetting map}.

Let $P_0$ be the loop condition on $A$ consisting of all loops
that are freely null-homotopic in $A$.
For each $x\in A$, let
\[
c_x:I\longrightarrow A,
\qquad
c_x(t)=x,
\]
be the constant loop at $x$.

Every element of $L\pi(X,A,\ast;P_0)$ has a representative of the
form
\[
[c_x,\alpha],
\]
where
\[
\alpha\in\Pi(X,A,\ast)
\quad\text{and}\quad
\alpha(0)=x.
\]
Thus
\[
L\pi(X,A,\ast;P_0)
=
\{
[c_x,\alpha]
\mid
\alpha\in\Pi(X,A,\ast),\ \alpha(0)=x
\}.
\]
Indeed, a free null-homotopy of the loop component, together
with the corresponding motion of the initial point of the path
component, gives such a representative.

\begin{proposition}\label{prop:loop-forgetting-bijection}
The restriction of the loop-forgetting map,
\[
\varpi_0:
L\pi(X,A,\ast;P_0)
\longrightarrow
\pi(X,A,\ast),
\]
is a bijection.
\end{proposition}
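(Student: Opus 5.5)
Surjectivity will come first, since it is immediate. Given $[\alpha]\in\pi(X,A,\ast)$ with $x=\alpha(0)\in A$, the pair $(c_x,\alpha)$ lies in $L\Pi(X,A,\ast;P_0)$, because the constant loop is freely null-homotopic. Its image under $\varpi_0$ is $[\alpha]$.

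For injectivity, suppose $\varpi_0([\ell,\alpha])=\varpi_0([\ell',\alpha'])$. By the remark preceding the proposition, we may assume $\ell=c_x$ and $\ell'=c_{x'}$ with $x=\alpha(0)$ and $x'=\alpha'(0)$. By hypothesis there is a homotopy $K:I\times I\to X$ from $\alpha$ to $\alpha'$ with $K(0,s)\in A$ and $K(1,s)=\ast$ for all $s$. Define $H(t,s)=K(0,s)$. This is a family of constant loops in $A$, starting at $c_x$ and ending at $c_{x'}$, and it satisfies $H(0,s)=H(1,s)=K(0,s)$. Thus $(H,K)$ is a homotopy in $L\Pi(X,A,\ast)$ from $(c_x,\alpha)$ to $(c_{x'},\alpha')$. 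Every intermediate pair has a constant, hence null-homotopic, loop component, so it lies in $L\Pi(X,A,\ast;P_0)$. Therefore $[c_x,\alpha]=[c_{x'},\alpha']$.

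The main obstacle is therefore the representative claim used above: every $[\ell,\alpha]$ with $\ell\in P_0$ equals some $[c_y,\beta]$. Since the paper states this only briefly, I would justify it as follows. Take a free null-homotopy $F:I\times I\to A$ with $F(t,0)=\ell(t)$, each $F(\cdot,s)$ a loop, and $F(\cdot,1)=c_y$. Let $\gamma(s)=F(0,s)$ be the track of the base point. Define $K(\cdot,s)=\gamma|_{[0,s]}^{-1}\alpha$, meaning the path that runs from $\gamma(s)$ back along $\gamma$ to $\gamma(0)=\alpha(0)$ and then follows $\alpha$. This is continuous in $s$ after the standard reparametrization, with the degenerate piece at $s=0$ absorbed, so that $K(\cdot,0)$ is a reparametrization of $\alpha$ and hence homotopic to it relative to endpoints. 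Then $(F,K)$ exhibits $(\ell,\alpha)\sim(c_y,\gamma^{-1}\alpha)$.

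One subtlety remains: a free homotopy ending at a constant loop is what "freely null-homotopic" means, so this step applies as stated. Combined with the two paragraphs above, $\varpi_0$ is a bijection.
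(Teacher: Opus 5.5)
Your proof is correct and uses the same two ingredients as the paper. The paper packages them as an explicit inverse $\sigma([\alpha])=[c_{\alpha(0)},\alpha]$: its well-definedness is exactly your injectivity argument with $H(t,s)=K(0,s)$, and its left-inverse property rests on the same constant-loop representative claim. Your construction of that representative, dragging the tail of $\alpha$ back along the base-point track $\gamma(s)=F(0,s)$, correctly fills in the one-line justification the paper gives.
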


\begin{proof}
Define a map
\[
\sigma:
\pi(X,A,\ast)
\longrightarrow
L\pi(X,A,\ast;P_0)
\]
by
\[
\sigma([\alpha])
=
[c_{\alpha(0)},\alpha].
\]

We first show that $\sigma$ is well defined.
Suppose that
\[
[\alpha]=[\alpha']
\]
in $\pi(X,A,\ast)$.
Let
\[
K:I\times I\longrightarrow X
\]
be a homotopy from $\alpha$ to $\alpha'$ such that, for every
$s\in I$,
\[
K(0,s)\in A
\quad\text{and}\quad
K(1,s)=\ast.
\]
Define
\[
H:I\times I\longrightarrow A
\]
by
\[
H(t,s)=K(0,s).
\]
Then, for every $s\in I$, $H(\,\cdot\,,s)$ is the constant loop
at $K(0,s)$, and
\[
H(0,s)=H(1,s)=K(0,s).
\]
Thus the pair $(H,K)$ gives a homotopy in
$L\Pi(X,A,\ast;P_0)$ from
\[
(c_{\alpha(0)},\alpha)
\]
to
\[
(c_{\alpha'(0)},\alpha').
\]
Hence
\[
[c_{\alpha(0)},\alpha]
=
[c_{\alpha'(0)},\alpha'],
\]
and therefore $\sigma$ is well defined.

For any $[\alpha]\in\pi(X,A,\ast)$, we have
\[
(\varpi_0\circ\sigma)([\alpha])
=
\varpi_0([c_{\alpha(0)},\alpha])
=
[\alpha].
\]
Thus
\[
\varpi_0\circ\sigma
=
\operatorname{id}_{\pi(X,A,\ast)}.
\]

On the other hand, let
\[
[c_x,\alpha]
\in
L\pi(X,A,\ast;P_0),
\]
where $\alpha(0)=x$.
Then
\[
\begin{aligned}
(\sigma\circ\varpi_0)([c_x,\alpha])
&=
\sigma([\alpha])
\\
&=
[c_{\alpha(0)},\alpha]
\\
&=
[c_x,\alpha].
\end{aligned}
\]
Hence
\[
\sigma\circ\varpi_0
=
\operatorname{id}_{L\pi(X,A,\ast;P_0)}.
\]

Therefore $\varpi_0$ is a bijection with inverse $\sigma$.
\end{proof}

In particular, the loop-forgetting map
\[
\varpi:
L\pi(X,A,\ast)
\longrightarrow
\pi(X,A,\ast)
\]
is surjective.


\section{Functoriality and homotopy invariance}
\label{sec:functoriality}

Let $(X,A)$ and $(Y,B)$ be pairs consisting of topological spaces
and their subspaces, and let $\ast_X\in X$ and $\ast_Y\in Y$ be
base points.

By a map
\[
f:(X,A,\ast_X)\longrightarrow(Y,B,\ast_Y)
\]
we mean a continuous map $f:X\to Y$ such that
\[
f(A)\subset B
\quad\text{and}\quad
f(\ast_X)=\ast_Y.
\]

\begin{proposition}\label{prop:induced-homomorphism}
Let
\[
f:(X,A,\ast_X)\longrightarrow(Y,B,\ast_Y)
\]
be a map as above.
Then $f$ induces a well-defined map
\[
f_\ast:
L\pi(X,A,\ast_X)
\longrightarrow
L\pi(Y,B,\ast_Y)
\]
defined by
\[
f_\ast([\ell,\alpha])
=
[f\circ\ell,f\circ\alpha].
\]
Moreover, $f_\ast$ is a quandle homomorphism
\[
f_\ast:
LQ(X,A,\ast_X)
\longrightarrow
LQ(Y,B,\ast_Y).
\]
\end{proposition}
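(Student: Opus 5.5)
The plan is to argue directly from the definitions, in three steps: the map is defined on representatives, it respects the homotopy relation, and it commutes with the operations.

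First I check that $f$ sends $L\Pi(X,A,\ast_X)$ into $L\Pi(Y,B,\ast_Y)$. Let $(\ell,\alpha)\in L\Pi(X,A,\ast_X)$. Since $f(A)\subset B$, the path $f\circ\ell$ is a loop in $B$. We have
\[
(f\circ\ell)(0)=(f\circ\ell)(1)=f(\alpha(0))=(f\circ\alpha)(0)
\quad\text{and}\quad
(f\circ\alpha)(1)=f(\ast_X)=\ast_Y .
\]
So $f_\Pi(\ell,\alpha)=(f\circ\ell,f\circ\alpha)$ defines a map of representatives.

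Next comes well-definedness on classes. Suppose $(H,K)$ is a homotopy in $L\Pi(X,A,\ast_X)$ from $(\ell,\alpha)$ to $(\ell',\alpha')$. Then I claim $(f\circ H,f\circ K)$ is a homotopy in $L\Pi(Y,B,\ast_Y)$ from $(f\circ\ell,f\circ\alpha)$ to $(f\circ\ell',f\circ\alpha')$. Indeed, $f\circ H$ takes values in $B$, and the boundary conditions follow by composing the identities $H(0,s)=H(1,s)=K(0,s)$ and $K(1,s)=\ast_X$ with $f$. Hence $f_\ast([\ell,\alpha])=[f\circ\ell,f\circ\alpha]$ is well defined.

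Finally, for the homomorphism property I use the fact that composition with $f$ commutes with concatenation and inversion of paths, literally and not just up to homotopy:
\[
f\circ(\beta\gamma)=(f\circ\beta)(f\circ\gamma)
\quad\text{and}\quad
f\circ\beta^{-1}=(f\circ\beta)^{-1}.
\]
Therefore
\[
f_\Pi\bigl((\ell_1,\alpha_1)\rhd(\ell_2,\alpha_2)\bigr)
=\bigl(f\circ\ell_1,(f\circ\alpha_1)(f\circ\alpha_2)^{-1}(f\circ\ell_2)(f\circ\alpha_2)\bigr)
=f_\Pi(\ell_1,\alpha_1)\rhd f_\Pi(\ell_2,\alpha_2).
\]
The same computation with $\ell_2^{-1}$ gives the identity for $\overline{\rhd}$. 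Passing to classes, using Theorem~\ref{theorem:mainA} for the well-definedness of the operations, shows that $f_\ast$ is a quandle homomorphism.

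There is no real obstacle here. The only point needing care is that concatenation is defined by the same parametrization in both spaces, so the identities above hold on the nose.
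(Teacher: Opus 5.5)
Your proposal is correct and follows the paper's proof essentially step for step. Like the paper, you check that composition with $f$ preserves $L\Pi$, you push a homotopy $(H,K)$ forward to $(f\circ H,f\circ K)$, and you verify the operations by direct computation. Your remark that composition with $f$ commutes with concatenation and inversion on the nose is exactly what justifies the paper's one-line computation of $f_\ast(x\rhd y)$.
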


\begin{proof}
Let
\[
(\ell,\alpha)\in L\Pi(X,A,\ast_X).
\]
Since $\ell(I)\subset A$ and $f(A)\subset B$, we have
\[
(f\circ\ell)(I)\subset B.
\]
Moreover,
\[
(f\circ\ell)(0)
=
(f\circ\ell)(1)
=
(f\circ\alpha)(0)
\]
and
\[
(f\circ\alpha)(1)
=
f(\ast_X)
=
\ast_Y.
\]
Hence
\[
(f\circ\ell,f\circ\alpha)
\in
L\Pi(Y,B,\ast_Y).
\]

Suppose that
\[
(\ell,\alpha)\sim(\ell',\alpha')
\]
in $L\Pi(X,A,\ast_X)$.
Let
\[
H:I\times I\to A
\quad\text{and}\quad
K:I\times I\to X
\]
be homotopies realizing this equivalence.
Then
\[
f\circ H:I\times I\to B
\quad\text{and}\quad
f\circ K:I\times I\to Y
\]
give a homotopy from
$(f\circ\ell,f\circ\alpha)$ to
$(f\circ\ell',f\circ\alpha')$ in
$L\Pi(Y,B,\ast_Y)$.
Thus
\[
(f\circ\ell,f\circ\alpha)
\sim
(f\circ\ell',f\circ\alpha'),
\]
and hence $f_\ast$ is well defined.

It remains to verify that $f_\ast$ preserves the quandle
operations.
For
\[
x=[\ell_1,\alpha_1],
\qquad
y=[\ell_2,\alpha_2],
\]
we have
\[
\begin{aligned}
f_\ast(x\rhd y)
&=
f_\ast
\bigl(
[\ell_1,
\alpha_1\alpha_2^{-1}\ell_2\alpha_2]
\bigr)
\\
&=
[
f\circ\ell_1,
(f\circ\alpha_1)
(f\circ\alpha_2)^{-1}
(f\circ\ell_2)
(f\circ\alpha_2)
]
\\
&=
f_\ast(x)\rhd f_\ast(y).
\end{aligned}
\]
Similarly,
\[
f_\ast
\bigl(
x\,\overline{\rhd}\,y
\bigr)
=
f_\ast(x)\,\overline{\rhd}\,f_\ast(y).
\]
Therefore $f_\ast$ is a quandle homomorphism.
\end{proof}

We now consider loop conditions.
Let $(X,A)$ and $(Y,B)$ be pairs consisting of topological spaces
and their subspaces, and let $\ast_X\in X$ and $\ast_Y\in Y$ be
base points.
Let $P_A$ and $P_B$ be loop conditions on $A$ and $B$,
respectively.

For a map $f:X\to Y$, put
\[
f_\sharp(P_A)
=
\{f\circ\ell\mid \ell\in P_A\}.
\]
By a map
\[
f:(X,A,\ast_X;P_A)
\longrightarrow
(Y,B,\ast_Y;P_B)
\]
we mean a continuous map $f:X\to Y$ such that
\[
f(A)\subset B,
\qquad
f(\ast_X)=\ast_Y,
\qquad
f_\sharp(P_A)\subset P_B.
\]

\begin{proposition}\label{prop:induced-homomorphism-loop-condition}
Let
\[
f:(X,A,\ast_X;P_A)
\longrightarrow
(Y,B,\ast_Y;P_B)
\]
be a map as above.
Then $f$ induces a well-defined map
\[
f_\ast:
L\pi(X,A,\ast_X;P_A)
\longrightarrow
L\pi(Y,B,\ast_Y;P_B)
\]
defined by
\[
f_\ast([\ell,\alpha])
=
[f\circ\ell,f\circ\alpha].
\]
Moreover, $f_\ast$ is a quandle homomorphism
\[
f_\ast:
LQ(X,A,\ast_X;P_A)
\longrightarrow
LQ(Y,B,\ast_Y;P_B).
\]
\end{proposition}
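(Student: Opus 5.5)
The plan is to reduce everything to Proposition~\ref{prop:induced-homomorphism} together with the fact, from Theorem~\ref{theorem:mainB}, that $L\pi(X,A,\ast_X;P_A)$ and $L\pi(Y,B,\ast_Y;P_B)$ are subquandles of $LQ(X,A,\ast_X)$ and $LQ(Y,B,\ast_Y)$. Since $f$ is in particular a map of based pairs $(X,A,\ast_X)\to(Y,B,\ast_Y)$, Proposition~\ref{prop:induced-homomorphism} gives a well-defined quandle homomorphism
\[
f_\ast: LQ(X,A,\ast_X)\longrightarrow LQ(Y,B,\ast_Y),\qquad
f_\ast([\ell,\alpha])=[f\circ\ell,f\circ\alpha].
\]
The map in the statement is the restriction of this $f_\ast$ to $L\pi(X,A,\ast_X;P_A)$. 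The only thing to prove is therefore that this restriction lands in $L\pi(Y,B,\ast_Y;P_B)$.

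For that step, take a class in $L\pi(X,A,\ast_X;P_A)$. By definition it has a representative $(\ell,\alpha)$ with $\ell\in P_A$. The hypothesis $f_\sharp(P_A)\subset P_B$ gives $f\circ\ell\in P_B$, so $[f\circ\ell,f\circ\alpha]$ lies in $L\pi(Y,B,\ast_Y;P_B)$.

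We must also check that this membership does not depend on the chosen representative. If $(\ell',\alpha')$ is another representative, then $\ell'$ is freely homotopic to $\ell$, hence lies in $P_A$. Moreover, $f\circ\ell'$ is freely homotopic to $f\circ\ell$ via $f\circ H$. Since $P_B$ is invariant under free homotopy, membership in $P_B$ is independent of the representative. This is the same remark the paper uses to see that $L\pi(\,\cdot\,;P)$ is well defined.

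Finally, a restriction of a quandle homomorphism to subquandles, with image contained in the target subquandle, preserves $\rhd$ and $\overline{\rhd}$. This is because the operations on the subquandles are the restricted ones by Theorem~\ref{theorem:mainB}. Hence the restricted $f_\ast$ is a quandle homomorphism $LQ(X,A,\ast_X;P_A)\to LQ(Y,B,\ast_Y;P_B)$.

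There is no real obstacle here. The only point needing care is the well-definedness of membership in $P_B$, and that is handled by the free-homotopy invariance of $P_B$ as above.
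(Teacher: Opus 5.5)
Your proposal is correct and follows essentially the same route as the paper: restrict the homomorphism of Proposition~\ref{prop:induced-homomorphism} and use $f_\sharp(P_A)\subset P_B$ to see that the image lands in $L\pi(Y,B,\ast_Y;P_B)$, so the operations are preserved. Your extra representative-independence check is harmless but redundant. Well-definedness on classes already comes from Proposition~\ref{prop:induced-homomorphism}, and membership in $L\pi(Y,B,\ast_Y;P_B)$ is representative-independent by the free-homotopy invariance of $P_B$.
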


\begin{proof}
Let
\[
[\ell,\alpha]
\in
L\pi(X,A,\ast_X;P_A).
\]
Then $\ell\in P_A$.
Since
\[
f_\sharp(P_A)\subset P_B,
\]
we have
\[
f\circ\ell\in P_B.
\]
Moreover, by Proposition~\ref{prop:induced-homomorphism},
\[
[f\circ\ell,f\circ\alpha]
\in
L\pi(Y,B,\ast_Y).
\]
Therefore
\[
[f\circ\ell,f\circ\alpha]
\in
L\pi(Y,B,\ast_Y;P_B).
\]
Thus the map
\[
f_\ast:
L\pi(X,A,\ast_X;P_A)
\longrightarrow
L\pi(Y,B,\ast_Y;P_B)
\]
is well defined.

Since the map on
$L\pi(X,A,\ast_X;P_A)$ is the restriction of the quandle
homomorphism
\[
f_\ast:
LQ(X,A,\ast_X)
\longrightarrow
LQ(Y,B,\ast_Y)
\]
given in Proposition~\ref{prop:induced-homomorphism},
it preserves both quandle operations.
Hence
\[
f_\ast:
LQ(X,A,\ast_X;P_A)
\longrightarrow
LQ(Y,B,\ast_Y;P_B)
\]
is a quandle homomorphism.
\end{proof}

\begin{proposition}\label{prop:functoriality-loop-condition}
Let
\[
f:(X,A,\ast_X;P_A)
\longrightarrow
(Y,B,\ast_Y;P_B)
\]
and
\[
g:(Y,B,\ast_Y;P_B)
\longrightarrow
(Z,C,\ast_Z;P_C)
\]
be maps as above.
Let
\[
f_\ast:
LQ(X,A,\ast_X;P_A)
\longrightarrow
LQ(Y,B,\ast_Y;P_B)
\]
and
\[
g_\ast:
LQ(Y,B,\ast_Y;P_B)
\longrightarrow
LQ(Z,C,\ast_Z;P_C)
\]
be the induced quandle homomorphisms.
Then the following hold:
\begin{enumerate}
\item[(i)]
If $1_X:X\to X$ denotes the identity map, then
\[
(1_X)_\ast
=
\operatorname{id}_{LQ(X,A,\ast_X;P_A)}.
\]

\item[(ii)]
We have
\[
(g\circ f)_\ast
=
g_\ast\circ f_\ast.
\]
\end{enumerate}
\end{proposition}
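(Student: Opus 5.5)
The plan is to verify both identities directly on representatives, using the explicit formula $f_\ast([\ell,\alpha])=[f\circ\ell,f\circ\alpha]$ from Proposition~\ref{prop:induced-homomorphism-loop-condition}. Since every element of $L\pi(X,A,\ast_X;P_A)$ is the class of some pair $(\ell,\alpha)\in L\Pi(X,A,\ast_X;P_A)$, it suffices to compare the two sides on an arbitrary class $[\ell,\alpha]$. Well-definedness of all the maps involved is already guaranteed by that proposition.

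For (i), the first step is to check that the identity $1_X$ is a map $(X,A,\ast_X;P_A)\to(X,A,\ast_X;P_A)$. Clearly $1_X(A)=A$ and $1_X(\ast_X)=\ast_X$. Moreover $(1_X)_\sharp(P_A)=\{1_X\circ\ell\mid \ell\in P_A\}=P_A$. Then
\[
(1_X)_\ast([\ell,\alpha])=[1_X\circ\ell,1_X\circ\alpha]=[\ell,\alpha],
\]
so $(1_X)_\ast$ is the identity.

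For (ii), the first step is to check that $g\circ f$ is a map of quadruples $(X,A,\ast_X;P_A)\to(Z,C,\ast_Z;P_C)$, so that $(g\circ f)_\ast$ makes sense. We have $(g\circ f)(A)\subset g(B)\subset C$ and $(g\circ f)(\ast_X)=g(\ast_Y)=\ast_Z$. For the loop condition,
\[
(g\circ f)_\sharp(P_A)=g_\sharp\bigl(f_\sharp(P_A)\bigr)\subset g_\sharp(P_B)\subset P_C,
\]
which follows from the associativity $(g\circ f)\circ\ell=g\circ(f\circ\ell)$. Then we compute
\[
g_\ast\bigl(f_\ast([\ell,\alpha])\bigr)=g_\ast([f\circ\ell,f\circ\alpha])=[g\circ f\circ\ell,\,g\circ f\circ\alpha]=(g\circ f)_\ast([\ell,\alpha]).
\]

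There is no real obstacle in this argument. The only point requiring care is confirming that the identity and the composite satisfy the loop-condition requirement $f_\sharp(P_A)\subset P_B$, so that the induced maps are defined between the subquandles with loop conditions. This follows from the set-level identity $(g\circ f)_\sharp=g_\sharp\circ f_\sharp$ given above.
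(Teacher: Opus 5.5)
Your proposal is correct and follows essentially the same route as the paper: you check that $1_X$ and $g\circ f$ are maps of based pairs with loop conditions (via $(1_X)_\sharp(P_A)=P_A$ and $(g\circ f)_\sharp(P_A)=g_\sharp(f_\sharp(P_A))\subset P_C$), then compare both sides on representatives $[\ell,\alpha]$. Your explicit verification of $(g\circ f)(A)\subset C$ and $(g\circ f)(\ast_X)=\ast_Z$ is simply a detail the paper leaves implicit.
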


\begin{proof}
The identity map $1_X:X\to X$ satisfies
\[
(1_X)_\sharp(P_A)=P_A,
\]
and hence is a map
\[
1_X:
(X,A,\ast_X;P_A)
\longrightarrow
(X,A,\ast_X;P_A).
\]
For any
\[
[\ell,\alpha]\in LQ(X,A,\ast_X;P_A),
\]
we have
\[
(1_X)_\ast([\ell,\alpha])
=
[1_X\circ\ell,1_X\circ\alpha]
=
[\ell,\alpha].
\]
Therefore
\[
(1_X)_\ast
=
\operatorname{id}_{LQ(X,A,\ast_X;P_A)}.
\]

For the composition, since
\[
f_\sharp(P_A)\subset P_B
\quad\text{and}\quad
g_\sharp(P_B)\subset P_C,
\]
we have
\[
(g\circ f)_\sharp(P_A)
=
g_\sharp\bigl(f_\sharp(P_A)\bigr)
\subset P_C.
\]
Thus
\[
g\circ f:
(X,A,\ast_X;P_A)
\longrightarrow
(Z,C,\ast_Z;P_C)
\]
is a map of based pairs with loop conditions.

For any
\[
[\ell,\alpha]\in LQ(X,A,\ast_X;P_A),
\]
we have
\[
\begin{aligned}
(g\circ f)_\ast([\ell,\alpha])
&=
[(g\circ f)\circ\ell,(g\circ f)\circ\alpha]
\\
&=
[g\circ(f\circ\ell),g\circ(f\circ\alpha)]
\\
&=
g_\ast([f\circ\ell,f\circ\alpha])
\\
&=
(g_\ast\circ f_\ast)([\ell,\alpha]).
\end{aligned}
\]
Hence
\[
(g\circ f)_\ast
=
g_\ast\circ f_\ast.
\]
\end{proof}

\begin{corollary}\label{cor:functoriality}
Let
\[
f:(X,A,\ast_X)\longrightarrow(Y,B,\ast_Y)
\]
and
\[
g:(Y,B,\ast_Y)\longrightarrow(Z,C,\ast_Z)
\]
be maps of based pairs.
Let
\[
f_\ast:
LQ(X,A,\ast_X)\longrightarrow LQ(Y,B,\ast_Y)
\]
and
\[
g_\ast:
LQ(Y,B,\ast_Y)\longrightarrow LQ(Z,C,\ast_Z)
\]
be the induced quandle homomorphisms.
Then the following hold:
\begin{enumerate}
\item[(i)]
If $1_X:X\to X$ denotes the identity map, then
\[
(1_X)_\ast
=
\operatorname{id}_{LQ(X,A,\ast_X)}.
\]

\item[(ii)]
We have
\[
(g\circ f)_\ast
=
g_\ast\circ f_\ast.
\]
\end{enumerate}
\end{corollary}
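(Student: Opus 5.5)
The plan is to deduce Corollary~\ref{cor:functoriality} from Proposition~\ref{prop:functoriality-loop-condition}, taking each loop condition to be the full set of loops. Let $P_A=L(A)$, $P_B=L(B)$ and $P_C=L(C)$. Each of these is trivially invariant under free homotopy, so each is a loop condition. By definition,
\[
L\Pi(X,A,\ast_X;L(A))=L\Pi(X,A,\ast_X),
\]
so
\[
LQ(X,A,\ast_X;L(A))=LQ(X,A,\ast_X)
\]
as quandles, with the same operations. The same holds for $(Y,B)$ and $(Z,C)$.

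Next I check that any map of based pairs $f:(X,A,\ast_X)\to(Y,B,\ast_Y)$ is automatically a map
\[
(X,A,\ast_X;L(A))\to(Y,B,\ast_Y;L(B)).
\]
Since $f(A)\subset B$, each $f\circ\ell$ with $\ell\in L(A)$ is a loop in $B$, so $f_\sharp(L(A))\subset L(B)$. The same argument applies to $g$.

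The homomorphism $f_\ast$ of Proposition~\ref{prop:induced-homomorphism-loop-condition} and that of Proposition~\ref{prop:induced-homomorphism} are both given by the formula $[\ell,\alpha]\mapsto[f\circ\ell,f\circ\alpha]$ on the same set. They are therefore the same map, and likewise for $g_\ast$ and $(g\circ f)_\ast$.

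Parts (i) and (ii) now follow directly from the corresponding parts of Proposition~\ref{prop:functoriality-loop-condition}. I expect no real obstacle here. The only point needing care is the identification of the two versions of $f_\ast$, which is immediate from the formulas. Alternatively, one can verify (i) and (ii) directly from the definition, since $1_X\circ\ell=\ell$ and $(g\circ f)\circ\ell=g\circ(f\circ\ell)$ hold on representatives.
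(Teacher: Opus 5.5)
Your proposal is correct and follows the paper's proof: specialize Proposition~\ref{prop:functoriality-loop-condition} to the full loop conditions $L(A)$, $L(B)$, $L(C)$ and use $LQ(X,A,\ast_X;L(A))=LQ(X,A,\ast_X)$. Your explicit checks that $f_\sharp(L(A))\subset L(B)$ and that the two versions of $f_\ast$ coincide fill in details the paper leaves implicit.
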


\begin{proof}
Apply Proposition~\ref{prop:functoriality-loop-condition} with
\[
P_A=L(A),
\qquad
P_B=L(B),
\qquad
P_C=L(C).
\]
Since
\[
LQ(X,A,\ast_X;L(A))=LQ(X,A,\ast_X),
\]
and similarly for $(Y,B,\ast_Y)$ and $(Z,C,\ast_Z)$,
the result follows immediately.
\end{proof}

Thus the assignment
\[
(X,A,\ast;P)
\longmapsto
LQ(X,A,\ast;P)
\]
is functorial with respect to maps of based pairs with loop
conditions. In particular, the assignment
\[
(X,A,\ast)\longmapsto LQ(X,A,\ast)
\]
is functorial with respect to maps of based pairs.

We next discuss homotopies of maps.

We say that two maps
\[
f,g:(X,A,\ast_X)\longrightarrow(Y,B,\ast_Y)
\]
are \textit{homotopic} if there exists a homotopy
\[
F:X\times I\longrightarrow Y
\]
such that
\[
F(x,0)=f(x),
\qquad
F(x,1)=g(x)
\]
for every $x\in X$, and, for every $s\in I$,
\[
F(A,s)\subset B
\quad\text{and}\quad
F(\ast_X,s)=\ast_Y.
\]
Equivalently, if we put
\[
f_s=F(\,\cdot\,,s),
\]
then
\[
f_s:(X,A,\ast_X)\longrightarrow(Y,B,\ast_Y)
\]
is a map for every $s\in I$.

Let $P_A$ and $P_B$ be loop conditions on $A$ and $B$,
respectively.
We say that two maps
\[
f,g:(X,A,\ast_X;P_A)
\longrightarrow
(Y,B,\ast_Y;P_B)
\]
are \textit{homotopic} if there exists a homotopy
\[
F:X\times I\longrightarrow Y
\]
such that, for every $s\in I$, the map
\[
f_s=F(\,\cdot\,,s)
\]
is a map
\[
f_s:(X,A,\ast_X;P_A)
\longrightarrow
(Y,B,\ast_Y;P_B).
\]

\begin{proposition}\label{prop:homotopy-invariance}
Suppose that
\[
f,g:(X,A,\ast_X;P_A)
\longrightarrow
(Y,B,\ast_Y;P_B)
\]
are homotopic.
Then
\[
f_\ast=g_\ast:
LQ(X,A,\ast_X;P_A)
\longrightarrow
LQ(Y,B,\ast_Y;P_B).
\]
\end{proposition}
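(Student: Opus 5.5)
The plan is to compose the given homotopy with a representative and check directly that the result is a homotopy in the sense used to define $L\pi$. Let $F:X\times I\to Y$ be a homotopy from $f$ to $g$ such that each $f_s=F(\,\cdot\,,s)$ is a map $(X,A,\ast_X;P_A)\to(Y,B,\ast_Y;P_B)$. Take an element $[\ell,\alpha]\in L\pi(X,A,\ast_X;P_A)$, so $(\ell,\alpha)\in L\Pi(X,A,\ast_X;P_A)$. The goal is to show that
\[
(f\circ\ell,f\circ\alpha)\sim(g\circ\ell,g\circ\alpha)
\]
in $L\Pi(Y,B,\ast_Y;P_B)$. Since $\ell$ and $\alpha$ are fixed representatives, this gives $f_\ast([\ell,\alpha])=g_\ast([\ell,\alpha])$. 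Well-definedness of $f_\ast$ and $g_\ast$ on classes is already provided by Proposition~\ref{prop:induced-homomorphism-loop-condition}.

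To build the homotopy, define
\[
H(t,s)=F(\ell(t),s),\qquad K(t,s)=F(\alpha(t),s).
\]
Both maps are continuous as composites of continuous maps. The checks are as follows.
\begin{enumerate}
\item[(a)] $H$ takes values in $B$. This holds because $\ell(t)\in A$ and $F(A,s)\subset B$ for every $s$.
\item[(b)] $H(t,0)=f\circ\ell$, $H(t,1)=g\circ\ell$, $K(t,0)=f\circ\alpha$ and $K(t,1)=g\circ\alpha$.
\item[(c)] $H(0,s)=H(1,s)=F(\ell(0),s)=F(\alpha(0),s)=K(0,s)$, using $\ell(0)=\ell(1)=\alpha(0)$.
\item[(d)] $K(1,s)=F(\ast_X,s)=\ast_Y$.
\end{enumerate}
Thus for every $s$ the pair $(H(\,\cdot\,,s),K(\,\cdot\,,s))=(f_s\circ\ell,f_s\circ\alpha)$ lies in $L\Pi(Y,B,\ast_Y)$, and $(H,K)$ is a homotopy in the sense of the definition.

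There is essentially no obstacle. The only point needing attention is that the loop condition is preserved along the homotopy. Each $f_s\circ\ell$ lies in $P_B$ because $(f_s)_\sharp(P_A)\subset P_B$. Even without that hypothesis, the endpoints $f\circ\ell$ and $g\circ\ell$ lie in $P_B$, and all intermediate loops are freely homotopic to them, so $P_B$-invariance under free homotopy would suffice. In any case, the equivalence relation on $L\pi(Y,B,\ast_Y;P_B)$ is the restriction of that on $L\pi(Y,B,\ast_Y)$, so membership of the intermediate stages in $P_B$ is not even required. This completes the argument that $f_\ast=g_\ast$.
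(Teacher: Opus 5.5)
Your proposal is correct and follows essentially the same route as the paper: you compose the representative with the homotopy $F$ to get the family $(f_s\circ\ell,f_s\circ\alpha)$, which is a homotopy in $L\Pi(Y,B,\ast_Y;P_B)$ from $(f\circ\ell,f\circ\alpha)$ to $(g\circ\ell,g\circ\alpha)$. Your explicit checks (a)--(d), and your remark on the loop condition, spell out details the paper leaves implicit.
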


\begin{proof}
Let
\[
F:X\times I\longrightarrow Y
\]
be a homotopy from $f$ to $g$, and put
\[
f_s=F(\,\cdot\,,s)
\qquad (s\in I).
\]
By assumption, each $f_s$ is a map
\[
f_s:(X,A,\ast_X;P_A)
\longrightarrow
(Y,B,\ast_Y;P_B).
\]

Let
\[
[\ell,\alpha]\in
LQ(X,A,\ast_X;P_A).
\]
For each $s\in I$, we have
\[
(f_s\circ\ell,f_s\circ\alpha)
\in
L\Pi(Y,B,\ast_Y;P_B).
\]
Moreover, as $s$ varies from $0$ to $1$, the family
\[
(f_s\circ\ell,f_s\circ\alpha)
\]
gives a homotopy in
$L\Pi(Y,B,\ast_Y;P_B)$ from
\[
(f\circ\ell,f\circ\alpha)
\]
to
\[
(g\circ\ell,g\circ\alpha).
\]
Hence
\[
[f\circ\ell,f\circ\alpha]
=
[g\circ\ell,g\circ\alpha].
\]
Therefore
\[
f_\ast([\ell,\alpha])
=
g_\ast([\ell,\alpha]).
\]
Since this holds for every
$[\ell,\alpha]\in LQ(X,A,\ast_X;P_A)$, we obtain
\[
f_\ast=g_\ast.
\]
\end{proof}

\begin{corollary}\label{cor:homotopy-invariance}
Suppose that
\[
f,g:(X,A,\ast_X)
\longrightarrow
(Y,B,\ast_Y)
\]
are homotopic.
Then
\[
f_\ast=g_\ast:
LQ(X,A,\ast_X)
\longrightarrow
LQ(Y,B,\ast_Y).
\]
\end{corollary}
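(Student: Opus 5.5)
The plan is to deduce Corollary~\ref{cor:homotopy-invariance} directly from Proposition~\ref{prop:homotopy-invariance} by taking the full loop conditions
\[
P_A=L(A)
\quad\text{and}\quad
P_B=L(B),
\]
exactly as Corollary~\ref{cor:functoriality} was deduced from Proposition~\ref{prop:functoriality-loop-condition}. With these choices we have $LQ(X,A,\ast_X;L(A))=LQ(X,A,\ast_X)$ and $LQ(Y,B,\ast_Y;L(B))=LQ(Y,B,\ast_Y)$. The induced maps $f_\ast$ and $g_\ast$ of Proposition~\ref{prop:induced-homomorphism-loop-condition} then coincide with those of Proposition~\ref{prop:induced-homomorphism}, since both are given by the same formula $[\ell,\alpha]\mapsto[f\circ\ell,f\circ\alpha]$.

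The only thing to check is that a homotopy $F$ of maps of based pairs $(X,A,\ast_X)\to(Y,B,\ast_Y)$ is also a homotopy of maps of based pairs with loop conditions
\[
(X,A,\ast_X;L(A))\to(Y,B,\ast_Y;L(B)).
\]
For each $s\in I$, the map $f_s=F(\,\cdot\,,s)$ satisfies $f_s(A)\subset B$ and $f_s(\ast_X)=\ast_Y$ by assumption. For any loop $\ell$ in $A$, the composite $f_s\circ\ell$ is a loop in $B$, so $(f_s)_\sharp(L(A))\subset L(B)$ holds automatically. Hence each $f_s$ is a map in the sense required by Proposition~\ref{prop:homotopy-invariance}, and that proposition yields $f_\ast=g_\ast$.

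There is no real obstacle here. The only point needing a moment's care is confirming that $L(A)$ is indeed a loop condition, which is trivial because it contains all loops. One could also repeat the proof of Proposition~\ref{prop:homotopy-invariance} verbatim: for each $[\ell,\alpha]$, the family $(f_s\circ\ell,f_s\circ\alpha)$ is a homotopy in $L\Pi(Y,B,\ast_Y)$ from $(f\circ\ell,f\circ\alpha)$ to $(g\circ\ell,g\circ\alpha)$. The specialization argument above is shorter, so that is the one I will use.
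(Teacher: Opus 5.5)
Your proposal is correct and follows the paper's proof: apply Proposition~\ref{prop:homotopy-invariance} with $P_A=L(A)$ and $P_B=L(B)$, using $LQ(X,A,\ast_X;L(A))=LQ(X,A,\ast_X)$ and $LQ(Y,B,\ast_Y;L(B))=LQ(Y,B,\ast_Y)$. Your explicit check that each $f_s$ satisfies $(f_s)_\sharp(L(A))\subset L(B)$ is left implicit in the paper.
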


\begin{proof}
Apply Proposition~\ref{prop:homotopy-invariance} with
\[
P_A=L(A)
\quad\text{and}\quad
P_B=L(B),
\]
using
\[
LQ(X,A,\ast_X;L(A))=LQ(X,A,\ast_X)
\]
and
\[
LQ(Y,B,\ast_Y;L(B))=LQ(Y,B,\ast_Y).
\]
\end{proof}

We say that a map
\[
f:(X,A,\ast_X)\longrightarrow(Y,B,\ast_Y)
\]
is a \textit{homotopy equivalence} if there exists a map
\[
g:(Y,B,\ast_Y)\longrightarrow(X,A,\ast_X)
\]
such that
\[
g\circ f
\simeq
1_X
\]
as maps
\[
(X,A,\ast_X)\longrightarrow(X,A,\ast_X),
\]
and
\[
f\circ g
\simeq
1_Y
\]
as maps
\[
(Y,B,\ast_Y)\longrightarrow(Y,B,\ast_Y).
\]
In this case, we say that
$(X,A,\ast_X)$ and $(Y,B,\ast_Y)$ are
\textit{homotopy equivalent} and write
\[
(X,A,\ast_X)\simeq(Y,B,\ast_Y).
\]

Similarly, we say that a map
\[
f:(X,A,\ast_X;P_A)
\longrightarrow
(Y,B,\ast_Y;P_B)
\]
is a \textit{homotopy equivalence} if there exists a map
\[
g:(Y,B,\ast_Y;P_B)
\longrightarrow
(X,A,\ast_X;P_A)
\]
such that
\[
g\circ f
\simeq
1_X
\]
as maps of based pairs with loop conditions, and
\[
f\circ g
\simeq
1_Y
\]
as maps of based pairs with loop conditions.
In this case, we say that
$(X,A,\ast_X;P_A)$ and $(Y,B,\ast_Y;P_B)$ are
\textit{homotopy equivalent} and write
\[
(X,A,\ast_X;P_A)
\simeq
(Y,B,\ast_Y;P_B).
\]

\begin{proposition}\label{prop:homotopy-equivalence-invariance}
Suppose that
\[
f:(X,A,\ast_X;P_A)
\longrightarrow
(Y,B,\ast_Y;P_B)
\]
is a homotopy equivalence.
Then the induced quandle homomorphism
\[
f_\ast:
LQ(X,A,\ast_X;P_A)
\longrightarrow
LQ(Y,B,\ast_Y;P_B)
\]
is an isomorphism.
\end{proposition}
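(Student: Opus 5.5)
The argument is the standard formal one: functoriality together with homotopy invariance. By the definition of a homotopy equivalence of based pairs with loop conditions, there is a map
\[
g:(Y,B,\ast_Y;P_B)\longrightarrow(X,A,\ast_X;P_A)
\]
with $g\circ f\simeq 1_X$ and $f\circ g\simeq 1_Y$. Both homotopies are taken as maps of based pairs with loop conditions, meaning every intermediate map $F(\,\cdot\,,s)$ respects the subspaces, the base points, and the loop conditions. By Proposition~\ref{prop:induced-homomorphism-loop-condition}, $g$ induces a quandle homomorphism
\[
g_\ast:LQ(Y,B,\ast_Y;P_B)\to LQ(X,A,\ast_X;P_A).
\]
This $g_\ast$ will serve as the inverse of $f_\ast$.

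We first apply Proposition~\ref{prop:homotopy-invariance} to the homotopic maps $g\circ f$ and $1_X$ from $(X,A,\ast_X;P_A)$ to itself, which gives $(g\circ f)_\ast=(1_X)_\ast$. Proposition~\ref{prop:functoriality-loop-condition} then gives $(g\circ f)_\ast=g_\ast\circ f_\ast$ and $(1_X)_\ast=\operatorname{id}$, so
\[
g_\ast\circ f_\ast=\operatorname{id}_{LQ(X,A,\ast_X;P_A)}.
\]
The same two steps applied to $f\circ g\simeq 1_Y$ give
\[
f_\ast\circ g_\ast=\operatorname{id}_{LQ(Y,B,\ast_Y;P_B)}.
\]
Hence $f_\ast$ is a bijection with inverse $g_\ast$.

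Finally, $f_\ast$ is a quandle homomorphism by Proposition~\ref{prop:induced-homomorphism-loop-condition}, and it is bijective. The inverse of a bijective quandle homomorphism is automatically a homomorphism, and here it is also directly the homomorphism $g_\ast$. So $f_\ast$ is an isomorphism.

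No step is a genuine obstacle. The only point needing care is that each homotopy in the hypothesis is a homotopy of maps of based pairs with loop conditions, so that Proposition~\ref{prop:homotopy-invariance} applies verbatim. This is exactly what the definition of homotopy equivalence in this setting provides.
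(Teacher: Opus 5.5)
Your proposal is correct and follows the paper's proof: you take the homotopy inverse $g$, use homotopy invariance to get $(g\circ f)_\ast=(1_X)_\ast$ and $(f\circ g)_\ast=(1_Y)_\ast$, and use functoriality to conclude that $g_\ast$ is a two-sided inverse of $f_\ast$. You are also right that the only point needing care is that the homotopies are homotopies of maps of based pairs with loop conditions, which the definition of homotopy equivalence supplies.
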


\begin{proof}
Let
\[
g:(Y,B,\ast_Y;P_B)
\longrightarrow
(X,A,\ast_X;P_A)
\]
be a homotopy inverse of $f$.
Then
\[
g\circ f\simeq 1_X
\quad\text{and}\quad
f\circ g\simeq 1_Y.
\]
By Proposition~\ref{prop:homotopy-invariance},
\[
(g\circ f)_\ast=(1_X)_\ast
\quad\text{and}\quad
(f\circ g)_\ast=(1_Y)_\ast.
\]
By Proposition~\ref{prop:functoriality-loop-condition},
\[
(g\circ f)_\ast=g_\ast\circ f_\ast
\quad\text{and}\quad
(f\circ g)_\ast=f_\ast\circ g_\ast.
\]
Moreover,
\[
(1_X)_\ast
=
\operatorname{id}_{LQ(X,A,\ast_X;P_A)}
\]
and
\[
(1_Y)_\ast
=
\operatorname{id}_{LQ(Y,B,\ast_Y;P_B)}.
\]
Hence
\[
g_\ast\circ f_\ast
=
\operatorname{id}_{LQ(X,A,\ast_X;P_A)}
\]
and
\[
f_\ast\circ g_\ast
=
\operatorname{id}_{LQ(Y,B,\ast_Y;P_B)}.
\]
Therefore $f_\ast$ is a quandle isomorphism with inverse $g_\ast$.
\end{proof}

\begin{corollary}\label{cor:homotopy-equivalence-invariance}
Suppose that
\[
f:(X,A,\ast_X)
\longrightarrow
(Y,B,\ast_Y)
\]
is a homotopy equivalence.
Then the induced quandle homomorphism
\[
f_\ast:
LQ(X,A,\ast_X)
\longrightarrow
LQ(Y,B,\ast_Y)
\]
is an isomorphism.
\end{corollary}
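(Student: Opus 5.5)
The plan is to deduce this corollary from Proposition~\ref{prop:homotopy-equivalence-invariance} by taking the loop conditions to be all loops, exactly as Corollaries~\ref{cor:functoriality} and~\ref{cor:homotopy-invariance} were deduced from their loop-condition versions. Put
\[
P_A=L(A)
\quad\text{and}\quad
P_B=L(B).
\]
These are loop conditions, since $L(A)$ and $L(B)$ are trivially invariant under free homotopy.

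First, any map $h:(X,A,\ast_X)\to(Y,B,\ast_Y)$ of based pairs is automatically a map $(X,A,\ast_X;L(A))\to(Y,B,\ast_Y;L(B))$. The only additional condition is $h_\sharp(L(A))\subset L(B)$, and this holds because $h(A)\subset B$, so $h\circ\ell$ is a loop in $B$ for every loop $\ell$ in $A$. The same applies to the homotopy inverse $g$ of $f$, to the compositions $g\circ f$ and $f\circ g$, and to the identity maps.

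Second, a homotopy $F$ of based pairs, for instance from $g\circ f$ to $1_X$, is also a homotopy of maps with loop conditions in this case. Each stage $F(\,\cdot\,,s)$ is a map of based pairs, and so by the first step it is a map of based pairs with loop conditions $L(A)$ and $L(B)$. Hence $f$ is a homotopy equivalence of $(X,A,\ast_X;L(A))$ and $(Y,B,\ast_Y;L(B))$ in the sense defined above.

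Finally, Proposition~\ref{prop:homotopy-equivalence-invariance} shows that $f_\ast$ is an isomorphism $LQ(X,A,\ast_X;L(A))\to LQ(Y,B,\ast_Y;L(B))$. These quandles coincide with $LQ(X,A,\ast_X)$ and $LQ(Y,B,\ast_Y)$ by definition, and the induced map is given by the same formula $[\ell,\alpha]\mapsto[f\circ\ell,f\circ\alpha]$, which gives the corollary. There is no real obstacle; the only point needing care is the second step, namely that every stage of each homotopy satisfies the loop condition, and this is immediate because $L(B)$ contains all loops in $B$.
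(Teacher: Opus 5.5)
Your proposal is correct and matches the paper's proof, which simply applies Proposition~\ref{prop:homotopy-equivalence-invariance} with $P_A=L(A)$ and $P_B=L(B)$. Your checks that maps and homotopies of based pairs are automatically maps and homotopies with these loop conditions spell out what the paper leaves implicit.
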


\begin{proof}
Apply Proposition~\ref{prop:homotopy-equivalence-invariance} with
\[
P_A=L(A)
\quad\text{and}\quad
P_B=L(B).
\]
\end{proof}

\section{The loop fundamental quandle as an augmented quandle}
\label{sec:augmented-quandle}

We first recall the notion of an augmented quandle.

\begin{proposition}[\cite{Joyce82}, \cite{FennRourke92}]
\label{prop:def-augmented-quandle}
Let $Q$ be a set and let $G$ be a group acting on $Q$ from the right,
\[
Q\times G\longrightarrow Q,
\qquad
(q,g)\longmapsto qg.
\]
Let
\[
\varepsilon:Q\longrightarrow G
\]
be a map satisfying the following conditions:
\begin{enumerate}
\item[(i)]
For every $q\in Q$,
\[
q\varepsilon(q)=q.
\]

\item[(ii)]
For every $q\in Q$ and $g\in G$,
\[
\varepsilon(qg)
=
g^{-1}\varepsilon(q)g.
\]
\end{enumerate}
Define binary operations $\rhd$ and $\overline{\rhd}$ on $Q$ by
\[
a\rhd b
=
a\varepsilon(b)
\]
and
\[
a\,\overline{\rhd}\,b
=
a\varepsilon(b)^{-1}.
\]
Then
\[
(Q,\rhd,\overline{\rhd})
\]
is a quandle.
\end{proposition}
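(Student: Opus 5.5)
We verify the three quandle axioms directly from conditions (i) and (ii) on $\varepsilon$, using only that $Q\times G\to Q$ is a right action: $q1=q$ and $(qg)h=q(gh)$. No topology is involved; the argument is purely algebraic. The order is (i), (ii), (iii), with (iii) the only step needing the equivariance condition (ii).

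For axiom (i), condition (i) gives $a\rhd a=a\varepsilon(a)=a$ immediately.

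For axiom (ii), we use the action property:
\[
(a\rhd b)\,\overline{\rhd}\,b
=(a\varepsilon(b))\varepsilon(b)^{-1}
=a\bigl(\varepsilon(b)\varepsilon(b)^{-1}\bigr)=a1=a.
\]
Symmetrically,
\[
(a\,\overline{\rhd}\,b)\rhd b=(a\varepsilon(b)^{-1})\varepsilon(b)=a.
\]

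For axiom (iii), the left side is
\[
(a\rhd b)\rhd c=a\varepsilon(b)\varepsilon(c).
\]
For the right side, we first compute
\[
(a\rhd c)\rhd(b\rhd c)=\bigl(a\varepsilon(c)\bigr)\varepsilon\bigl(b\varepsilon(c)\bigr).
\]
Applying condition (ii) with $q=b$ and $g=\varepsilon(c)$ gives
\[
\varepsilon\bigl(b\varepsilon(c)\bigr)=\varepsilon(c)^{-1}\varepsilon(b)\varepsilon(c).
\]
Hence the right side equals
\[
a\,\varepsilon(c)\varepsilon(c)^{-1}\varepsilon(b)\varepsilon(c)=a\varepsilon(b)\varepsilon(c),
\]
which agrees with the left side.

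The only point requiring care is the convention in step (iii). Because the action is on the right, the conjugation in condition (ii) must be $g^{-1}\varepsilon(q)g$, not $g\varepsilon(q)g^{-1}$, for the cancellation $\varepsilon(c)\varepsilon(c)^{-1}$ to occur in the correct position. Everything else is routine.
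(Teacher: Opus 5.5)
Your proof is correct and complete: each axiom follows exactly as you compute, and hypothesis (ii) is needed only in the self-distributivity step, through $\varepsilon(b\varepsilon(c))=\varepsilon(c)^{-1}\varepsilon(b)\varepsilon(c)$. The paper gives no proof of its own here and cites Joyce and Fenn--Rourke; the standard argument is the same direct check of the axioms that you give, and your axiom (iii) computation matches the cancellation the paper carries out in its proof that $L\pi(X,A,\ast)$ is a quandle.
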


The quandle $(Q,\rhd,\overline{\rhd})$, together with the right
action of $G$ on $Q$ and the map
\[
\varepsilon:Q\longrightarrow G,
\]
is called an \textit{augmented quandle}.
See \cite{Joyce82,FennRourke92}.

Let $(X,A)$ be a pair consisting of a topological space $X$ and a
subspace $A$ of $X$, let $\ast$ be a base point of $X$, and let
$P$ be a loop condition on $A$.

\begin{proposition}
\label{prop:interpretation-as-augmented-quandle}
Let
\[
Q=L\pi(X,A,\ast;P)
\quad\text{and}\quad
G=\pi_1(X,\ast).
\]
Define a right action of $G$ on $Q$ by
\[
[\ell,\alpha]\,[\gamma]
=
[\ell,\alpha\gamma],
\]
where
\[
[\ell,\alpha]\in Q
\quad\text{and}\quad
[\gamma]\in G.
\]
Define also a map
\[
\varepsilon:Q\longrightarrow G
\]
by
\[
\varepsilon([\ell,\alpha])
=
[\alpha^{-1}\ell\alpha].
\]

Then the following statements hold:
\begin{enumerate}
\item[(i)]
The above right action of $G$ on $Q$ and the map
$\varepsilon:Q\to G$ are well defined, and they satisfy
conditions {\rm (i)} and {\rm (ii)} of
Proposition~\ref{prop:def-augmented-quandle}.

\item[(ii)]
The quandle structure on $Q$ induced by this augmented quandle
structure coincides with the loop fundamental quandle structure
on
\[
LQ(X,A,\ast;P).
\]
\end{enumerate}
\end{proposition}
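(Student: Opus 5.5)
We verify the claims in the order the statement lists them, working with representatives and the homotopies $(H,K)$ from the definition of $\sim$ on $L\Pi(X,A,\ast)$.

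\emph{Well-definedness of the action.} Let $(H,K)$ be a homotopy from $(\ell,\alpha)$ to $(\ell',\alpha')$, and let $G_\gamma$ be a based homotopy from $\gamma$ to $\gamma'$ in $\pi_1(X,\ast)$. Concatenating the second components slice by slice gives the family $(H(\cdot,s),K(\cdot,s)G_\gamma(\cdot,s))$. Since $K(1,s)=\ast=G_\gamma(0,s)$, this is a homotopy in $L\Pi(X,A,\ast)$ from $(\ell,\alpha\gamma)$ to $(\ell',\alpha'\gamma')$. The loop component is untouched, so $P$ is preserved. The action axioms hold because path concatenation is associative and unital up to homotopy rel endpoints. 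Here $[c_\ast]$ acts trivially and $([\ell,\alpha][\gamma])[\delta]=[\ell,\alpha\gamma\delta]$; the relevant rel-endpoint homotopies of $\alpha$ are homotopies in $L\Pi$ with $H$ constant.

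\emph{Well-definedness of $\varepsilon$.} This is the step needing care, since the base point of $\ell$ moves during a homotopy. Given $(H,K)$ as above, the loops $\alpha_s^{-1}\ell_s\alpha_s$, with $\ell_s=H(\cdot,s)$ and $\alpha_s=K(\cdot,s)$, form a continuous family of loops based at $\ast$. Indeed, $\alpha_s$ starts at $H(0,s)=H(1,s)$ and ends at $\ast$. This family is therefore a based homotopy, and $[\alpha^{-1}\ell\alpha]=[\alpha'^{-1}\ell'\alpha']$ in $G$. This is exactly the same slice-wise argument used in the proof of Theorem~\ref{theorem:mainA}.

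\emph{Condition (i).} We have $q\varepsilon(q)=[\ell,\alpha\alpha^{-1}\ell\alpha]=[\ell,\ell\alpha]$. By the base-point-sliding homotopy already established in axiom (i) of the proof of Theorem~\ref{theorem:mainA}, this equals $[\ell,\alpha]$. If a fully explicit homotopy is wanted, take
\[
H(t,s)=\ell(t+s \bmod 1)
\]
and let $K(\cdot,s)$ be the path $\ell|_{[s,1]}$ followed by $\alpha$.

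\emph{Condition (ii).} We compute
\[
\varepsilon([\ell,\alpha\gamma])=[\gamma^{-1}\alpha^{-1}\ell\alpha\gamma]=[\gamma]^{-1}\varepsilon([\ell,\alpha])[\gamma].
\]
This is immediate from the group structure of $\pi_1(X,\ast)$.

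\emph{Part (ii).} For $a=[\ell_1,\alpha_1]$ and $b=[\ell_2,\alpha_2]$,
\[
a\varepsilon(b)=[\ell_1,\alpha_1\alpha_2^{-1}\ell_2\alpha_2]=a\rhd b,
\]
and likewise $a\varepsilon(b)^{-1}=[\ell_1,\alpha_1\alpha_2^{-1}\ell_2^{-1}\alpha_2]=a\,\overline{\rhd}\,b$, using $(\alpha_2^{-1}\ell_2\alpha_2)^{-1}\simeq\alpha_2^{-1}\ell_2^{-1}\alpha_2$.

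The main obstacle is bookkeeping rather than any conceptual difficulty. We must check that homotopies rel endpoints of the path component, and slice-wise concatenations, really are homotopies in $L\Pi(X,A,\ast;P)$. Once this is accepted, as the paper does throughout, every step reduces to an identity in $\pi_1(X,\ast)$.
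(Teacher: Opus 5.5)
Your proposal is correct and follows the paper's proof essentially step by step. The action is handled by concatenating slice-wise with a based homotopy of $\gamma$, $\varepsilon$ by the slice-wise conjugate loops $K(\cdot,s)^{-1}H(\cdot,s)K(\cdot,s)$, condition (i) by the base-point-sliding identity $[\ell,\ell\alpha]=[\ell,\alpha]$, and condition (ii) and part (ii) by direct computation in $\pi_1(X,\ast)$. Your only addition is the explicit formula for the sliding homotopy, which the paper describes only in words.
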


\begin{proof}
We first verify that the right action is well defined.
Suppose that
\[
[\ell,\alpha]
=
[\ell',\alpha']
\]
in $Q$ and
\[
[\gamma]=[\gamma']
\]
in $G$.
Let
\[
H:I\times I\to A
\quad\text{and}\quad
K:I\times I\to X
\]
be homotopies realizing
\[
(\ell,\alpha)\sim(\ell',\alpha').
\]
Appending a homotopy from $\gamma$ to $\gamma'$ to the paths
$K(\,\cdot\,,s)$ gives a homotopy from
\[
(\ell,\alpha\gamma)
\]
to
\[
(\ell',\alpha'\gamma').
\]
Hence
\[
[\ell,\alpha\gamma]
=
[\ell',\alpha'\gamma'],
\]
so the action is well defined.

The identities
\[
q\,1=q
\]
and
\[
(qg)h=q(gh)
\]
follow immediately from the corresponding properties of
concatenation of paths, up to homotopy relative to endpoints.
Thus this defines a right action of $G$ on $Q$.

We next show that $\varepsilon$ is well defined.
Suppose that
\[
[\ell,\alpha]
=
[\ell',\alpha']
\]
in $Q$.
Let
\[
H:I\times I\to A
\quad\text{and}\quad
K:I\times I\to X
\]
be homotopies realizing
\[
(\ell,\alpha)\sim(\ell',\alpha').
\]
For each $s\in I$, the loop
\[
K(\,\cdot\,,s)^{-1}
H(\,\cdot\,,s)
K(\,\cdot\,,s)
\]
is based at $\ast$.
As $s$ varies from $0$ to $1$, these loops give a based homotopy
from
\[
\alpha^{-1}\ell\alpha
\]
to
\[
(\alpha')^{-1}\ell'\alpha'.
\]
Therefore
\[
[\alpha^{-1}\ell\alpha]
=
[(\alpha')^{-1}\ell'\alpha']
\]
in $\pi_1(X,\ast)$.
Hence $\varepsilon$ is well defined.

Now let
\[
q=[\ell,\alpha]\in Q.
\]
Then
\[
\begin{aligned}
q\varepsilon(q)
&=
[\ell,\alpha]\,[\alpha^{-1}\ell\alpha]
\\
&=
[\ell,\alpha\alpha^{-1}\ell\alpha]
\\
&=
[\ell,\ell\alpha]
\\
&=
[\ell,\alpha]
\\
&=
q.
\end{aligned}
\]
Thus condition {\rm (i)} of
Proposition~\ref{prop:def-augmented-quandle} is satisfied.

Next, let
\[
q=[\ell,\alpha]\in Q
\quad\text{and}\quad
g=[\gamma]\in G.
\]
Then
\[
\begin{aligned}
\varepsilon(qg)
&=
\varepsilon([\ell,\alpha\gamma])
\\
&=
[(\alpha\gamma)^{-1}\ell(\alpha\gamma)]
\\
&=
[\gamma^{-1}\alpha^{-1}\ell\alpha\gamma]
\\
&=
[\gamma]^{-1}
[\alpha^{-1}\ell\alpha]
[\gamma]
\\
&=
g^{-1}\varepsilon(q)g.
\end{aligned}
\]
Hence condition {\rm (ii)} of
Proposition~\ref{prop:def-augmented-quandle} is also satisfied.
This proves (i).

We now prove (ii).
To distinguish the two quandle structures on the set
\[
Q=L\pi(X,A,\ast;P),
\]
let $\rhd'$ and $\overline{\rhd}'$ denote the operations induced
by the augmented quandle structure, while
$\rhd$ and $\overline{\rhd}$ denote the operations of the loop
fundamental quandle.

Let
\[
[\ell_1,\alpha_1],
[\ell_2,\alpha_2]
\in Q.
\]
Then
\[
\begin{aligned}
\relax[\ell_1,\alpha_1]
\rhd'
[\ell_2,\alpha_2]
&=
[\ell_1,\alpha_1]\,
\varepsilon([\ell_2,\alpha_2])
\\
&=
[\ell_1,\alpha_1]\,
[\alpha_2^{-1}\ell_2\alpha_2]
\\
&=
[\ell_1,
\alpha_1\alpha_2^{-1}\ell_2\alpha_2]
\\
&=
[\ell_1,\alpha_1]
\rhd
[\ell_2,\alpha_2].
\end{aligned}
\]
Similarly,
\[
\begin{aligned}
\relax[\ell_1,\alpha_1]
\,\overline{\rhd}'\,
[\ell_2,\alpha_2]
&=
[\ell_1,\alpha_1]\,
\varepsilon([\ell_2,\alpha_2])^{-1}
\\
&=
[\ell_1,\alpha_1]\,
[\alpha_2^{-1}\ell_2\alpha_2]^{-1}
\\
&=
[\ell_1,
\alpha_1\alpha_2^{-1}\ell_2^{-1}\alpha_2]
\\
&=
[\ell_1,\alpha_1]
\,\overline{\rhd}\,
[\ell_2,\alpha_2].
\end{aligned}
\]
Therefore
\[
\rhd'=\rhd
\quad\text{and}\quad
\overline{\rhd}'=\overline{\rhd}.
\]
Hence the quandle structure induced by the augmented quandle
structure is exactly the loop fundamental quandle
$LQ(X,A,\ast;P)$.
\end{proof}

\section{Joyce's fundamental quandle and the knot quandle}
\label{sec:joyce-knot-quandle}

In this section, we first recall Joyce's fundamental quandle of a pair
of topological spaces and then recall the knot quandle of a knot.

Let
\[
D^2=\{(x,y)\in\mathbb{R}^2\mid x^2+y^2\leq 1\}
\]
be the closed unit disk, endowed with the orientation induced by the
standard orientation of $\mathbb{R}^2$.
Let $O=(0,0)$.
We regard the boundary $\partial D^2$ as a loop based at $(1,0)$,
oriented counterclockwise.
More precisely, let
\[
\delta:I\to\partial D^2,\qquad
\delta(t)=(\cos 2\pi t,\sin 2\pi t).
\]

Let
\[
T=\{(x,0)\in\mathbb{R}^2\mid 1\leq x\leq 5\},
\]
and let
\[
N=D^2\cup T.
\]

Let $(X,A)$ be a pair consisting of a topological space $X$ and
a subspace $A$ of $X$, and choose a base point
$\ast\in X\setminus A$.

By a \textit{noose for $(X,A)$ based at $\ast$}, we mean a map
\[
\nu:N\to X
\]
such that
\[
\nu^{-1}(A)=\{O\}
\quad\text{and}\quad
\nu(5,0)=\ast.
\]

Let $\nu:N\to X$ be a noose for $(X,A)$ based at $\ast$.
Associated with $\nu$ are maps
\[
d_\nu:D^2\to X
\quad\text{and}\quad
\alpha_\nu:I\to X,
\]
where $d_\nu$ is the restriction of $\nu$ to $D^2$, and
$\alpha_\nu$ is defined by
\[
\alpha_\nu(t)=\nu(4t+1,0).
\]
Thus,
\[
d_\nu(1,0)=\alpha_\nu(0),
\qquad
\alpha_\nu(1)=\ast.
\]

Conversely, suppose that maps
\[
d:D^2\to X
\quad\text{and}\quad
\alpha:I\to X
\]
satisfy
\[
d^{-1}(A)=\{O\},\qquad
\alpha^{-1}(A)=\emptyset,
\]
\[
d(1,0)=\alpha(0),
\qquad
\alpha(1)=\ast.
\]
Then there exists a unique noose $\nu$ for $(X,A)$ based at $\ast$
such that
\[
d=d_\nu
\quad\text{and}\quad
\alpha=\alpha_\nu.
\]
We call $\nu$ the \textit{noose determined by $d$ and $\alpha$}.

Two nooses $\nu$ and $\nu'$ for $(X,A)$ based at $\ast$ are said
to be \textit{equivalent}, or \textit{homotopic}, if there exists
a homotopy
\[
F:N\times I\to X
\]
from $\nu$ to $\nu'$ such that, for every $s\in I$, the map
\[
F_s:N\to X,\qquad F_s(z)=F(z,s),
\]
is a noose for $(X,A)$ based at $\ast$.

Let $\mathcal{Q}(X,A,\ast)$ denote the set of equivalence classes
of nooses for $(X,A)$ based at $\ast$.

For a map $d:D^2\to X$, define the loop
\[
\partial d:I\to X
\]
by
\[
\partial d=d\circ\delta.
\]

For two nooses $\nu_i$ $(i=1,2)$, let
\[
d_i=d_{\nu_i}:D^2\to X,
\qquad
\alpha_i=\alpha_{\nu_i}:I\to X.
\]
We define $\nu_1\rhd\nu_2$ to be the noose determined by
$d_1$ and
\[
\alpha_1\alpha_2^{-1}(\partial d_2)\alpha_2.
\]
Similarly, we define
$\nu_1\,\overline{\rhd}\,\nu_2$ to be the noose determined by
$d_1$ and
\[
\alpha_1\alpha_2^{-1}(\partial d_2)^{-1}\alpha_2.
\]

Joyce \cite{Joyce82} proved that, for any two elements
$[\nu_1]$ and $[\nu_2]$ of $\mathcal{Q}(X,A,\ast)$, the formulas
\[
[\nu_1]\rhd[\nu_2]
=
[\nu_1\rhd\nu_2]
\]
and
\[
[\nu_1]\,\overline{\rhd}\,[\nu_2]
=
[\nu_1\,\overline{\rhd}\,\nu_2]
\]
define well-defined binary operations on
$\mathcal{Q}(X,A,\ast)$, and that
$\mathcal{Q}(X,A,\ast)$ equipped with these operations is a quandle.

\begin{definition}[\cite{Joyce82}]
Let $(X,A)$ be a pair consisting of a topological space $X$ and
a subspace $A$ of $X$, and let $\ast\in X\setminus A$.
The \textit{fundamental quandle of the pair $(X,A)$ based at $\ast$}
is the quandle
\[
Q(X,A,\ast)
=
\bigl(
\mathcal{Q}(X,A,\ast),
\rhd,
\overline{\rhd}
\bigr).
\]
\end{definition}

Let $K$ be an oriented knot in the $3$-sphere $S^3$, and let
$\ast\in S^3\setminus K$.

A \textit{positive} (respectively, \textit{negative})
\textit{meridional noose for $K$} is a noose for $(S^3,K)$
based at $\ast$ such that
\[
d_\nu:D^2\to S^3
\]
is a positive (respectively, negative) meridian disk of the
oriented knot $K$.

Let $Q(K,\ast)$ be the subset of $Q(S^3,K,\ast)$ consisting of
equivalence classes that admit representatives which are positive
meridional nooses for $K$.
Then $Q(K,\ast)$ is closed under the quandle operations of
$Q(S^3,K,\ast)$; in other words, $Q(K,\ast)$ is a subquandle of
$Q(S^3,K,\ast)$.

\begin{definition}[\cite{Joyce82}]
Let $K$ be an oriented knot in $S^3$, and let
$\ast\in S^3\setminus K$.
The \textit{knot quandle of $K$ based at $\ast$} is the quandle
$Q(K,\ast)$.
\end{definition}

Let $\widetilde{Q}(K,\ast)$ be the subset of $Q(S^3,K,\ast)$
consisting of equivalence classes that admit representatives which
are either positive or negative meridional nooses for $K$.
This subset is independent of the choice of orientation of $K$,
and hence $\widetilde{Q}(K,\ast)$ is well defined even when $K$
is unoriented.
Moreover, it is closed under the quandle operations of
$Q(S^3,K,\ast)$.

\begin{definition}
Let $K$ be a knot in $S^3$, and let
$\ast\in S^3\setminus K$.
The \textit{full knot quandle of $K$ based at $\ast$} is the quandle
$\widetilde{Q}(K,\ast)$.
\end{definition}

The full knot quandle $\widetilde{Q}(K,\ast)$ admits a natural
involution
\[
\rho:\widetilde{Q}(K,\ast)\longrightarrow
\widetilde{Q}(K,\ast),
\]
which interchanges the classes represented by positive and negative
meridional nooses.
This involution is a \textit{good involution}.
The full knot quandle equipped with this good involution,
\[
\bigl(\widetilde{Q}(K,\ast),\rho\bigr),
\]
is called the \textit{knot symmetric quandle}.
See \cite{Kamada07,KamadaOshiro10,Kamada14} for symmetric quandles
and their applications to knot theory.

\section{The knot loop quandle: A generalization of the knot quandle}
\label{sec:knot-loop-quandle}

Let $K$ be a knot in $S^3$, and let $N(K)$ be a closed tubular
neighborhood of $K$.
Let
\[
E(K)=S^3\setminus\operatorname{Int}N(K)
\]
be the exterior of $K$, and fix a base point $\ast\in E(K)$.
Let
\[
A_K=\partial E(K)=\partial N(K).
\]

Let $P$ be a loop condition on $A_K$.

\begin{definition}
The \textit{knot loop quandle of $K$ based at $\ast$ with loop
condition $P$}, denoted by $LQ(K,\ast;P)$, is the loop fundamental
quandle
\[
LQ(K,\ast;P)
=
LQ(E(K),A_K,\ast;P)
=
\bigl(
L\pi(E(K),A_K,\ast;P),
\rhd,
\overline{\rhd}
\bigr).
\]
\end{definition}

Let
\[
\varpi:
L\pi(E(K),A_K,\ast)
\longrightarrow
\pi(E(K),A_K,\ast),
\qquad
\varpi([\ell,\alpha])=[\alpha],
\]
be the loop-forgetting map defined in
Section~\ref{sec:loop-fundamental-quandle}.
For a loop condition $P$ on $A_K$, denote by
\[
\varpi_P:
LQ(K,\ast;P)
\longrightarrow
\pi(E(K),A_K,\ast)
\]
the restriction of $\varpi$ to $LQ(K,\ast;P)$.

Let $P_0$ be the loop condition on $A_K$ consisting of all loops
that are freely null-homotopic in $A_K$.

\begin{proposition}\label{prop:knot-loop-forgetting-bijection}
The loop-forgetting map
\[
\varpi_0:
LQ(K,\ast;P_0)
\longrightarrow
\pi(E(K),A_K,\ast)
\]
is a bijection.
\end{proposition}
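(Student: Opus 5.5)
This statement is the special case of Proposition~\ref{prop:loop-forgetting-bijection} in which $(X,A)=(E(K),A_K)$. I would therefore derive it directly from that result. By definition, $LQ(K,\ast;P_0)=LQ(E(K),A_K,\ast;P_0)$, so its underlying set is $L\pi(E(K),A_K,\ast;P_0)$. Here $P_0$ is exactly the loop condition of freely null-homotopic loops in $A_K$ used in Section~\ref{sec:loop-fundamental-quandle}. Moreover, $\varpi_0$ is the restriction of the loop-forgetting map $\varpi$ of that section, so Proposition~\ref{prop:loop-forgetting-bijection} applies verbatim. No hypothesis on the base point is needed, since $\ast$ was not required to lie outside $A$.

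For completeness, I would recall the explicit inverse. It is $\sigma([\alpha])=[c_{\alpha(0)},\alpha]$, where $c_{\alpha(0)}$ is the constant loop at $\alpha(0)\in A_K$.

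Well-definedness of $\sigma$ uses the homotopy $H(t,s)=K(0,s)$. This requires only that $K(0,s)\in A_K$ for all $s$, which is part of the definition of homotopy in $\Pi(E(K),A_K,\ast)$.

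The identity $\varpi_0\circ\sigma=\mathrm{id}$ is immediate. The identity $\sigma\circ\varpi_0=\mathrm{id}$ relies on the fact that every class in $L\pi(E(K),A_K,\ast;P_0)$ has a representative $[c_x,\alpha]$. This follows by dragging the initial point of $\alpha$ along a free null-homotopy of the loop component inside $A_K$.

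The only point needing care is this normal-form step. A free null-homotopy $H$ of $\ell$ moves the base point along the track $s\mapsto H(0,s)$, and we must produce a compatible family of paths $K(\cdot,s)$ starting at $H(0,s)$ and ending at $\ast$. I would take $K(\cdot,s)$ to be the reverse of the track from $H(0,s)$ back to $\ell(0)$, followed by $\alpha$.

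With this choice, the pair $(H(\cdot,s),K(\cdot,s))$ lies in $L\Pi$ at every stage. At $s=1$ we obtain $(c_x,\beta\alpha)$ for a path $\beta$ in $A_K$. Since this step is already asserted in the general setting earlier in the paper, I would simply cite it, and the proposition follows.
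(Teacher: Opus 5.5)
Your proposal is correct and matches the paper's proof, which simply specializes Proposition~\ref{prop:loop-forgetting-bijection} to $X=E(K)$ and $A=A_K$. Your extra detail on the normal-form step is also sound: you drag the initial point of $\alpha$ backward along the track $s\mapsto H(0,s)$ of a free null-homotopy. This fills in a justification that the paper only sketches in the general setting.
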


\begin{proof}
This follows immediately from
Proposition~\ref{prop:loop-forgetting-bijection}
by taking
\[
X=E(K)
\quad\text{and}\quad
A=A_K.
\]
\end{proof}

Suppose now that $K$ is oriented.
Let $P_1$ be the set of loops in $A_K$ that are freely homotopic
to a positive meridian loop of $K$.
Then $P_1$ is a loop condition on $A_K$.

\begin{proposition}\label{prop:knot-loop-forgetting-bijection-P1}
The loop-forgetting map
\[
\varpi_1:
LQ(K,\ast;P_1)
\longrightarrow
\pi(E(K),A_K,\ast)
\]
is a bijection.
\end{proposition}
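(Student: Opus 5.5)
We prove that $\varpi_1$ is surjective and then that it is injective.

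\emph{Surjectivity.} Fix a positive meridian loop $m$ of $K$ on the torus $A_K$, based at some point $y_0$. Let $[\alpha]\in\pi(E(K),A_K,\ast)$ with $x=\alpha(0)\in A_K$. Since $A_K$ is path connected, choose a path $\beta$ in $A_K$ from $x$ to $y_0$. The loop $\ell=\beta m\beta^{-1}$ is based at $x$ and is freely homotopic to $m$, so $\ell\in P_1$. Then $(\ell,\alpha)\in L\Pi(E(K),A_K,\ast;P_1)$ and $\varpi_1([\ell,\alpha])=[\alpha]$.

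\emph{Injectivity.} Suppose $[\ell,\alpha],[\ell',\alpha']\in LQ(K,\ast;P_1)$ and $[\alpha]=[\alpha']$, realized by a homotopy $K_s$ of paths whose initial points $x_s=K_s(0)$ trace a path in $A_K$. Transport the loop $\ell$ along $x_s$: take $H_s=\gamma_s^{-1}\ell\gamma_s$, where $\gamma_s$ is the part of $s\mapsto x_s$ from $x_0$ to $x_s$. This family is continuous in $s$ after standard reparametrization. The pair $(H_s,K_s)$ is a homotopy in $L\Pi$ from $(\ell,\alpha)$ to $(\ell'',\alpha')$, where $\ell''$ is based at $\alpha'(0)$ and lies in $P_1$. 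It therefore suffices to show the following: if $\ell,\ell'\in P_1$ are both based at $x=\alpha(0)$, then $(\ell,\alpha)\sim(\ell',\alpha)$.

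The key input is that $\pi_1(A_K,x)\cong\mathbb Z^2$ is abelian. Free homotopy classes of loops then coincide with based classes, since conjugation acts trivially. Hence $\ell$ and $\ell'$, being freely homotopic to the same meridian, are homotopic rel base point in $A_K$. Such a based homotopy $H$ in $A_K$, together with the constant homotopy $K(t,s)=\alpha(t)$, gives $(\ell,\alpha)\sim(\ell',\alpha)$.

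The main obstacle is this last step. We must justify that two freely homotopic loops in $A_K$ at a common base point are based-homotopic. A free homotopy from $\ell$ to $\ell'$ moves the base point around some loop $c$ in $A_K$, and gives $\ell'\simeq c^{-1}\ell c$ rel endpoints. Commutativity of $\pi_1(A_K,x)$ turns this into $\ell'\simeq\ell$.

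The remaining points are routine bookkeeping: checking that the transported family $H_s$ is continuous, and that every intermediate pair satisfies the endpoint conditions of $L\Pi$.
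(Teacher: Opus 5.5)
Your proposal is correct and follows essentially the paper's approach. The paper packages your surjectivity and injectivity steps as an explicit inverse $\sigma_1([\alpha])=[\ell_\alpha,\alpha]$ and checks both compositions. The key ingredients are the same as yours: transporting the loop along the base-point track $s\mapsto K(0,s)$, and using that $\pi_1(A_K)\cong\mathbb{Z}^2$ is abelian, so free and based homotopy classes of positive meridian loops coincide.
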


\begin{proof}
For each
\[
\alpha\in\Pi(E(K),A_K,\ast),
\]
let $\ell_\alpha$ be a positive meridian loop based at
$\alpha(0)$.
Since
\[
A_K\cong S^1\times S^1
\]
has abelian fundamental group, the free homotopy class of a
positive meridian determines a unique based homotopy class of
positive meridian loops at each point of $A_K$.
Thus the class
\[
[\ell_\alpha,\alpha]
\in
L\pi(E(K),A_K,\ast;P_1)
\]
does not depend on the choice of $\ell_\alpha$ within its based
homotopy class.

Define
\[
\sigma_1:
\pi(E(K),A_K,\ast)
\longrightarrow
L\pi(E(K),A_K,\ast;P_1)
\]
by
\[
\sigma_1([\alpha])
=
[\ell_\alpha,\alpha].
\]

We show that $\sigma_1$ is well defined.
Suppose that $\alpha$ and $\alpha'$ are homotopic in
\[
\Pi(E(K),A_K,\ast).
\]
Let
\[
K:I\times I\longrightarrow E(K)
\]
be a homotopy from $\alpha$ to $\alpha'$ such that
\[
K(0,s)\in A_K
\quad\text{and}\quad
K(1,s)=\ast
\]
for every $s\in I$.
The path
\[
\beta(s)=K(0,s)
\]
moves the base point from $\alpha(0)$ to $\alpha'(0)$.
Transporting the positive meridian $\ell_\alpha$ along $\beta$
gives a positive meridian loop based at $\alpha'(0)$.
Since its based homotopy class agrees with that of
$\ell_{\alpha'}$, there exists a homotopy
\[
H:I\times I\longrightarrow A_K
\]
from $\ell_\alpha$ to $\ell_{\alpha'}$ such that
\[
H(0,s)=H(1,s)=K(0,s)
\]
for every $s\in I$.
Thus $(H,K)$ gives a homotopy in
$L\Pi(E(K),A_K,\ast;P_1)$ from
\[
(\ell_\alpha,\alpha)
\]
to
\[
(\ell_{\alpha'},\alpha').
\]
Hence $\sigma_1$ is well defined.

Clearly,
\[
\varpi_1\circ\sigma_1
=
\operatorname{id}_{\pi(E(K),A_K,\ast)}.
\]

Conversely, let
\[
[\ell,\alpha]
\in
L\pi(E(K),A_K,\ast;P_1).
\]
Since $\ell$ is freely homotopic to a positive meridian and
$A_K$ has abelian fundamental group, $\ell$ is based homotopic
to $\ell_\alpha$ at the point $\alpha(0)$.
Therefore
\[
[\ell,\alpha]
=
[\ell_\alpha,\alpha],
\]
and hence
\[
\sigma_1\circ\varpi_1
=
\operatorname{id}_{L\pi(E(K),A_K,\ast;P_1)}.
\]

Thus $\varpi_1$ is a bijection with inverse $\sigma_1$.
\end{proof}

\begin{proposition}\label{prop:knot-quandle}
The knot loop quandle
\[
LQ(K,\ast;P_1)
\]
is naturally isomorphic to the knot quandle
\[
Q(K,\ast).
\]
\end{proposition}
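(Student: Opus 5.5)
We will construct an explicit map $\Phi:Q(K,\ast)\to LQ(K,\ast;P_1)$ and show it is a bijective quandle homomorphism. Since the knot quandle is defined for $(S^3,K)$ and $\ast\in S^3\setminus K$, we first assume (after an isotopy) that $\ast\in E(K)\setminus A_K$. We also take $N(K)$ to be a standard tubular neighborhood. Every positive meridional noose $\nu$ can then be deformed, through positive meridional nooses, so that $d_\nu$ meets $N(K)$ in a concentric subdisk $d_\nu^{-1}(N(K))=D_r$ of radius $r<1$ whose image is a meridian disk of $N(K)$. Put $\delta_r(t)=r\delta(t)$ and let $\tau_r$ be the radial segment from $(r,0)$ to $(1,0)$. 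We define
\[
\Phi([\nu])=[\,d_\nu\circ\delta_r,\ (d_\nu\circ\tau_r)\,\alpha_\nu\,].
\]
Here $d_\nu\circ\delta_r$ is a positive meridian loop on $A_K$, hence lies in $P_1$, and the path runs in $E(K)$ to $\ast$.

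The main obstacle is showing that $\Phi$ is well defined. A homotopy of nooses $F_s$ need not keep $F_s^{-1}(N(K))$ a round disk, so we will use a standard-position argument. Any homotopy of positive meridional nooses can be perturbed into one in which each $F_s$ meets $N(K)$ in a meridian disk: use a fibrewise radial deformation of $N(K)\setminus K$ near $K$ together with the noose condition $F_s^{-1}(K)=\{O\}$. By compactness one can choose a single radius $r$ working for all $s$. The restricted family then gives a homotopy in $L\Pi(E(K),A_K,\ast;P_1)$. Choices of $r$ are handled by radial isotopy.

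To avoid delicate point-set details, we will instead compare both sides with $\pi(E(K),A_K,\ast)$. By Proposition~\ref{prop:knot-loop-forgetting-bijection-P1}, $\varpi_1$ is a bijection. On the noose side, the assignment $[\nu]\mapsto[(d_\nu\circ\tau_r)\alpha_\nu]$ is also a bijection $Q(K,\ast)\to\pi(E(K),A_K,\ast)$, by the classical identification of $Q(K,\ast)$ with homotopy classes of paths from $\partial N(K)$ to $\ast$ (Joyce). Composing the inverse of one bijection with the other defines $\Phi$ and shows at once that it is bijective. Surjectivity can also be seen directly: given $\alpha\in\Pi(E(K),A_K,\ast)$, cap a positive meridian at $\alpha(0)$ with a meridian disk of $N(K)$ and extend by a collar to obtain a noose.

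It remains to check that $\Phi$ is a homomorphism. We have $\partial d_\nu\simeq\tau_r^{-1}\delta_r\tau_r$ after applying $d_\nu$, by radial contraction in the annulus $D^2\setminus\operatorname{Int}D_r$, which maps into $E(K)$. Write $\beta_i=(d_i\circ\tau_r)\alpha_i$ and $\ell_i=d_i\circ\delta_r$. Then
\[
\Phi([\nu_1\rhd\nu_2])=[\ell_1,(d_1\circ\tau_r)\alpha_1\alpha_2^{-1}(\partial d_2)\alpha_2]=[\ell_1,\beta_1\beta_2^{-1}\ell_2\beta_2],
\]
where the homotopy of the path component is rel endpoints. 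This equals $\Phi([\nu_1])\rhd\Phi([\nu_2])$, and the same computation handles $\overline{\rhd}$. Naturality means independence of the choice of $N(K)$ and compatibility with isotopies. It follows from Proposition~\ref{prop:homotopy-equivalence-invariance}, since shrinking the tubular neighborhood gives a homotopy equivalence of based pairs preserving $P_1$.
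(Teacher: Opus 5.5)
\paragraph{Main gap: injectivity of $\Phi$.} Bijectivity of $\Phi$ carries essentially all the content of the proposition, and you do not prove it. You define $\Phi$ as $\varpi_1^{-1}$ composed with a ``classical'' bijection $Q(K,\ast)\to\pi(E(K),A_K,\ast)$, $[\nu]\mapsto[(d_\nu\circ\tau_r)\alpha_\nu]$. But $\varpi_1$ is already a bijection by Proposition~\ref{prop:knot-loop-forgetting-bijection-P1}. So bijectivity of this noose-to-path map is \emph{equivalent} to bijectivity of $\Phi$. Citing it assumes what is to be proved, since the noose/path comparison is exactly what this proposition supplies in the paper's setting.

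The citation also does not avoid the point-set issue you wanted to sidestep. Well-definedness of $[\nu]\mapsto[(d_\nu\circ\tau_r)\alpha_\nu]$ on classes of nooses is the same problem as for $\Phi$, just with the loop forgotten. As a result, injectivity of $\Phi$ is never argued.

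The paper proves it with an explicit inverse $\xi$. Given a homotopy $(H,K)$ in $L\Pi(E(K),A_K,\ast;P_1)$, it first adjusts $H$ so that every $H(\,\cdot\,,s)$ is a positive meridian loop. It then caps these loops by a continuous family of meridian disks of $N(K)$, using the product structure, and attaches the tails $K(\,\cdot\,,s)$. This gives a homotopy of positive meridional nooses.

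Your own surjectivity construction, carried out in families, does the same job. Suppose $\Phi([\nu])=\Phi([\nu'])$. Then the tails are homotopic in $\Pi(E(K),A_K,\ast)$ via some $K$. Capping the point $K(0,s)$ with the meridian fibre of $N(K)\cong S^1\times D^2$ through it, continuously in $s$, gives the required homotopy of nooses. For this you also need the routine identification of $\nu$ with the noose whose disk is $d_\nu|_{D_r}$ (rescaled) and whose tail is $(d_\nu\circ\tau_r)\alpha_\nu$.

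\paragraph{Well-definedness sketch.} Your direct argument here rests on a claim that itself needs proof. Equivalence in $Q(K,\ast)$ is homotopy through arbitrary nooses of $(S^3,K)$. Their disks need not be embedded, or even locally injective at $O$. So ``perturb so that each $F_s$ meets $N(K)$ in a meridian disk'' is not a routine step.

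You do not need it. By compactness of $\{O\}\times I$, choose $r$ with $F(D_r\times I)\subset\operatorname{Int}N(K)$. Let $\rho\colon S^3\setminus K\to E(K)$ be the retraction that is radial projection on $N(K)\setminus K$ and the identity on $E(K)$. Then take the pairs $\bigl(\rho\circ F_s\circ\delta_r,\ \rho\circ((F_s\circ\tau_r)\alpha_{F_s})\bigr)$. This is a homotopy in $L\Pi(E(K),A_K,\ast)$. It stays in $P_1$ because $P_1$ is closed under free homotopy. At $s=0,1$ it agrees up to homotopy with your standardized representatives, by varying the radius.
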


\begin{proof}
Let $[\nu]\in Q(K,\ast)$, where $\nu$ is a positive meridional
noose for $K$.
Let
\[
d=d_\nu:D^2\to S^3
\quad\text{and}\quad
\alpha=\alpha_\nu:I\to S^3
\]
be the maps associated with $\nu$ as in the previous section.

Up to homotopy through positive meridional nooses, we may assume
that $d$ is a meridian disk of $K$ with respect to the tubular
neighborhood $N(K)$ and that $\alpha$ is a path in $E(K)$.
Using the product structure of $N(K)$, this standardization can be
carried out continuously along a homotopy through positive
meridional nooses.
Then
\[
\partial d:I\to A_K
\]
is a positive meridian loop of $K$, and hence belongs to $P_1$.
Thus
\[
(\partial d,\alpha)\in L\Pi(E(K),A_K,\ast;P_1).
\]
Define
\[
\eta:Q(K,\ast)\longrightarrow LQ(K,\ast;P_1)
\]
by
\[
\eta([\nu])=[\partial d,\alpha].
\]

If $\nu$ and $\nu'$ are homotopic through positive meridional
nooses, then the corresponding pairs
$(\partial d,\alpha)$ and $(\partial d',\alpha')$ are homotopic
in $L\Pi(E(K),A_K,\ast;P_1)$.
Hence $\eta$ is well defined.

We next verify that $\eta$ preserves the quandle operations.
Let $[\nu_i]\in Q(K,\ast)$ $(i=1,2)$, and let
\[
d_i=d_{\nu_i},
\qquad
\alpha_i=\alpha_{\nu_i}.
\]
By the definition of the quandle operation on $Q(K,\ast)$,
the noose $\nu_1\rhd\nu_2$ is determined by $d_1$ and
\[
\alpha_1\alpha_2^{-1}(\partial d_2)\alpha_2.
\]
Therefore
\[
\begin{aligned}
\eta([\nu_1]\rhd[\nu_2])
&=
\eta([\nu_1\rhd\nu_2])\\
&=
[\partial d_1,
\alpha_1\alpha_2^{-1}(\partial d_2)\alpha_2]\\
&=
[\partial d_1,\alpha_1]
\rhd
[\partial d_2,\alpha_2]\\
&=
\eta([\nu_1])\rhd\eta([\nu_2]).
\end{aligned}
\]
Similarly,
\[
\eta([\nu_1]\,\overline{\rhd}\,[\nu_2])
=
\eta([\nu_1])\,\overline{\rhd}\,\eta([\nu_2]).
\]
Thus $\eta$ is a quandle homomorphism.

Conversely, let
\[
[\ell,\alpha]\in LQ(K,\ast;P_1).
\]
Since $\ell\in P_1$, the loop $\ell$ is freely homotopic in $A_K$
to a positive meridian loop.
Using this free homotopy, we may replace the representative
$(\ell,\alpha)$ by a homotopic pair for which $\ell$ itself is a
positive meridian loop.
Let
\[
d:D^2\longrightarrow N(K)
\]
be a positive meridian disk bounded by $\ell$.
The maps $d$ and $\alpha$ determine a positive meridional noose
$\nu$ for $K$.
Define
\[
\xi:
LQ(K,\ast;P_1)
\longrightarrow
Q(K,\ast)
\]
by
\[
\xi([\ell,\alpha])=[\nu].
\]

We show that $\xi$ is well defined.
Suppose that
\[
[\ell,\alpha]=[\ell',\alpha']
\]
in $LQ(K,\ast;P_1)$, where $\ell$ and $\ell'$ are positive
meridian loops.
Let
\[
d,d':D^2\longrightarrow N(K)
\]
be positive meridian disks bounded by $\ell$ and $\ell'$,
respectively.
Let
\[
H:I\times I\longrightarrow A_K,
\qquad
K:I\times I\longrightarrow E(K)
\]
be a homotopy from $(\ell,\alpha)$ to
$(\ell',\alpha')$ in
$L\Pi(E(K),A_K,\ast;P_1)$.

After modifying the homotopy $H$ within $A_K$, if necessary,
we may assume that $H(\,\cdot\,,s)$ is a positive meridian loop
for every $s\in I$.
Using the product structure of the tubular neighborhood $N(K)$,
these loops bound a continuous family of positive meridian disks
connecting $d$ to $d'$, up to homotopy through positive meridian
disks.
Together with the paths $K(\,\cdot\,,s)$, this family determines
a homotopy through positive meridional nooses from the noose
determined by $d$ and $\alpha$ to the noose determined by $d'$
and $\alpha'$.

Moreover, any two choices of a positive meridian disk bounded by
the same positive meridian loop are homotopic through positive
meridian disks relative to the boundary.
Hence the element of $Q(K,\ast)$ obtained from $[\ell,\alpha]$
does not depend on the choices of the representative or the
positive meridian disk.
Therefore $\xi$ is well defined.

By construction,
\[
\xi\circ\eta
=
\operatorname{id}_{Q(K,\ast)}
\]
and
\[
\eta\circ\xi
=
\operatorname{id}_{LQ(K,\ast;P_1)}.
\]
Therefore $\eta$ is a quandle isomorphism.
\end{proof}

We next recall the power operation on a quandle.
Let $Q=(Q,\rhd,\overline{\rhd})$ be a quandle.
For each $a\in Q$, define
\[
S_a:Q\to Q,\qquad S_a(x)=x\rhd a.
\]
Then $S_a$ is a bijection and
\[
S_a^{-1}(x)=x\,\overline{\rhd}\,a.
\]

For an integer $n$, define binary operations
$\rhd^{[n]}$ and $\overline{\rhd^{[n]}}$ on $Q$ by
\[
x\rhd^{[n]}y=S_y^n(x)
\]
and
\[
x\,\overline{\rhd^{[n]}}\,y=S_y^{-n}(x).
\]
Then
\[
Q^{[n]}
=
\bigl(
Q,\rhd^{[n]},\overline{\rhd^{[n]}}
\bigr)
\]
is a quandle.

This construction is usually referred to as the $n$-th power
operation on a quandle; see Szymik~\cite{Szymik}.
See also Ulrickson~\cite{Ulrickson} for a characterization of
power operations in terms of endofunctors of the category of quandles.
We call $Q^{[n]}$ the \textit{$n$-th power quandle of $Q$}.

For an integer $n$, let $P_n$ be the set of loops in $A_K$ that
are freely homotopic to the $n$-th power of a positive meridian
loop of $K$.
Then $P_n$ is a loop condition on $A_K$. 
Note that, when $n=0$, $P_0$ is the loop condition on $A_K$ consisting of all loops
that are freely null-homotopic in $A_K$ as before.

\begin{proposition}\label{prop:power-knot-quandle}
For every integer $n$, the knot loop quandle
\[
LQ(K,\ast;P_n)
\]
is naturally isomorphic to the $n$-th power quandle
\[
Q(K,\ast)^{[n]}
\]
of the knot quandle.
\end{proposition}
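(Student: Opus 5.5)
The plan is to build an explicit bijection $\Phi_n\colon LQ(K,\ast;P_1)\to LQ(K,\ast;P_n)$ that intertwines $\rhd^{[n]}$ on the source with $\rhd$ on the target. Combined with Proposition~\ref{prop:knot-quandle}, this gives the result, since the isomorphism $\eta$ of Proposition~\ref{prop:knot-quandle} is also an isomorphism $Q(K,\ast)^{[n]}\cong LQ(K,\ast;P_1)^{[n]}$ (power operations are defined purely from $\rhd$).

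Define $\Phi_n([\ell,\alpha])=[\ell^n,\alpha]$, where $\ell^n$ is the $n$-fold concatenation of $\ell$; here $\ell^0$ is the constant loop and $\ell^{n}=(\ell^{-1})^{|n|}$ for $n<0$. Well-definedness is immediate: a homotopy $(H,K)$ in $L\Pi(E(K),A_K,\ast;P_1)$ yields the homotopy $(H(\cdot,s)^n,K(\cdot,s))$. Moreover, if $\ell\in P_1$ then $\ell^n\in P_n$.

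For bijectivity, I would use the loop-forgetting maps. We have $\varpi_n\circ\Phi_n=\varpi_1$, and $\varpi_1$ is a bijection by Proposition~\ref{prop:knot-loop-forgetting-bijection-P1}. So it suffices to show that $\varpi_n\colon LQ(K,\ast;P_n)\to\pi(E(K),A_K,\ast)$ is a bijection for every $n$. The proof of Proposition~\ref{prop:knot-loop-forgetting-bijection-P1} carries over verbatim with ``positive meridian'' replaced by ``$n$-th power of a positive meridian'', and for $n=0$ it is Proposition~\ref{prop:knot-loop-forgetting-bijection}.

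The key input there is that $\pi_1(A_K)\cong\mathbb Z^2$ is abelian. Hence free homotopy classes of loops coincide with based homotopy classes at every point, and these classes are compatible with transport along paths in $A_K$. This is the step requiring the most care, namely that a based homotopy can be chosen to follow a prescribed motion $K(0,s)$ of the base point. The argument is the same as before: transport along $\beta(s)=K(0,s)$ changes a based class only by conjugation, which is trivial in an abelian group. The upshot is that $\Phi_n=\varpi_n^{-1}\circ\varpi_1$ is a bijection.

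Finally, the operations must be compared. By Proposition~\ref{prop:interpretation-as-augmented-quandle}, $x\rhd y=x\,\varepsilon(y)$ in both quandles. Hence, in $LQ(K,\ast;P_1)$, $S_y^n(x)=x\,\varepsilon(y)^n$, and for $y=[\ell_2,\alpha_2]$,
\[
\varepsilon(y)^n=[\alpha_2^{-1}\ell_2\alpha_2]^n=[\alpha_2^{-1}\ell_2^n\alpha_2]=\varepsilon(\Phi_n(y)).
\]
Since $\Phi_n$ fixes the path component and the right $\pi_1$-action only alters that component, $\Phi_n$ commutes with the action:
\[
\Phi_n(x\rhd^{[n]}y)=\Phi_n(x\,\varepsilon(y)^n)=\Phi_n(x)\,\varepsilon(\Phi_n(y))=\Phi_n(x)\rhd\Phi_n(y).
\]
The same computation with $-n$ handles $\overline{\rhd}$. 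Thus $\Phi_n\circ\eta$ is the desired natural isomorphism $Q(K,\ast)^{[n]}\to LQ(K,\ast;P_n)$.

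For $n=0$ this is consistent with the statement: $Q^{[0]}$ is the trivial quandle, and in $LQ(K,\ast;P_0)$ we have $\varepsilon\equiv 1$.
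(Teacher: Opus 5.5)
Your proof is correct, and your $\Phi_n$ is exactly the paper's $\eta_n$. Your homomorphism check, via $\pi_1(E(K),\ast)$-equivariance and $\varepsilon\circ\Phi_n=\varepsilon^n$, is a conceptual repackaging of the paper's direct computation $S_y^n(x)=[\ell_1,\alpha_1\alpha_2^{-1}\ell_2^n\alpha_2]$.

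Where you genuinely differ is in bijectivity and in the case $n=0$. The paper handles $n=0$ separately: both quandles are trivial, and $\varpi_1^{-1}\circ\varpi_0$ is a bijection. For $n\neq0$ it builds an inverse $\xi_n([\ell^n,\alpha])=[\ell,\alpha]$ and checks well-definedness using injectivity of $g\mapsto g^n$ on $\pi_1(A_K)\cong\mathbb{Z}^2$.

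You instead use the factorization $\varpi_n\circ\Phi_n=\varpi_1$, together with the extension of the loop-forgetting bijection to every $P_n$. This treats all $n$ uniformly; your $\Phi_0$ is precisely the inverse of the paper's $\varpi_1^{-1}\circ\varpi_0$. It also shows that torsion-freeness of $\mathbb{Z}^2$ is not actually needed:
\begin{itemize}
\item injectivity of $\Phi_n$ follows from injectivity of $\varpi_1$ alone;
\item surjectivity only needs that two loops in the same free homotopy class of the torus are homotopic through loops whose base point follows any prescribed path between their base points.
\end{itemize}
That is the same abelianness the paper already uses to write elements of $P_n$ as $\ell^n$. Indeed, in the paper's check for $\xi_n$, the equality $[\beta^{-1}\ell\beta]=[\ell']$ holds directly, since both are meridians.

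The paper's route gives an explicit inverse on representatives. Yours gives a cleaner structural picture: via $\varpi_n$, every $LQ(K,\ast;P_n)$ is the set $\pi(E(K),A_K,\ast)$ with $[\alpha]\rhd[\beta]=[\alpha\beta^{-1}\mu_{\beta(0)}^{n}\beta]$, where $\mu_x$ is a positive meridian loop at $x$. From this the identification with $Q(K,\ast)^{[n]}$ is immediate.
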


\begin{proof}
We first consider the case $n=0$.
By Propositions~\ref{prop:knot-loop-forgetting-bijection} and
\ref{prop:knot-loop-forgetting-bijection-P1}, the map
\[
\varpi_1^{-1}\circ\varpi_0:
LQ(K,\ast;P_0)
\longrightarrow
LQ(K,\ast;P_1)
\]
is a natural bijection of the underlying sets.

The knot loop quandle $LQ(K,\ast;P_0)$ is a trivial quandle.
Indeed, by Proposition~\ref{prop:loop-forgetting-bijection},
every element has a representative of the form
\[
[c_x,\alpha],
\]
where $c_x$ is a constant loop.
Consequently, for any
$a,b\in LQ(K,\ast;P_0)$,
\[
a\rhd b=a
\quad\text{and}\quad
a\,\overline{\rhd}\,b=a.
\]

On the other hand, by definition,
$Q(K,\ast)^{[0]}$ is the trivial quandle on the underlying set
of $Q(K,\ast)$.
By Proposition~\ref{prop:knot-quandle}, the underlying set
of $Q(K,\ast)$ is naturally identified with that of
$LQ(K,\ast;P_1)$.
Hence
\[
LQ(K,\ast;P_0)
\cong
Q(K,\ast)^{[0]}.
\]

We now consider the case $n\neq 0$.
By Proposition~\ref{prop:knot-quandle}, we identify
$Q(K,\ast)$ with $LQ(K,\ast;P_1)$ as quandles.

Let
\[
[\ell,\alpha]\in LQ(K,\ast;P_1).
\]
Then $\ell$ is freely homotopic to a positive meridian loop of
$K$, and hence $\ell^n$ is freely homotopic to the $n$-th power
of a positive meridian loop.
Here and throughout, negative powers of loops are understood using
the inverse loop.
Thus $\ell^n\in P_n$.

Define
\[
\eta_n:
LQ(K,\ast;P_1)^{[n]}
\longrightarrow
LQ(K,\ast;P_n)
\]
by
\[
\eta_n([\ell,\alpha])
=
[\ell^n,\alpha].
\]

We first show that $\eta_n$ is well defined.
Suppose that $(\ell,\alpha)$ and $(\ell',\alpha')$ are homotopic
in $L\Pi(E(K),A_K,\ast;P_1)$.
Then there exist homotopies
\[
H:I\times I\to A_K
\quad\text{and}\quad
K:I\times I\to E(K)
\]
such that, for every $s\in I$,
\[
H(0,s)=H(1,s)=K(0,s),
\qquad
K(1,s)=\ast,
\]
and
\[
(H(\,\cdot\,,0),K(\,\cdot\,,0))
=
(\ell,\alpha),
\qquad
(H(\,\cdot\,,1),K(\,\cdot\,,1))
=
(\ell',\alpha').
\]
Taking the $n$-th power of the loop $H(\,\cdot\,,s)$ for each
$s\in I$ gives a homotopy from $\ell^n$ to $(\ell')^n$,
with the base point moving along $K(0,s)$.
Hence
\[
(\ell^n,\alpha)
\quad\text{and}\quad
((\ell')^n,\alpha')
\]
are homotopic in $L\Pi(E(K),A_K,\ast;P_n)$.
Therefore $\eta_n$ is well defined.

We next compare the quandle operations.
Let
\[
x=[\ell_1,\alpha_1],
\qquad
y=[\ell_2,\alpha_2]
\]
be elements of $LQ(K,\ast;P_1)$.
For every integer $n$,
\[
S_y^n(x)
=
[\ell_1,
\alpha_1\alpha_2^{-1}\ell_2^n\alpha_2].
\]
Consequently,
\[
x\rhd^{[n]}y
=
[\ell_1,
\alpha_1\alpha_2^{-1}\ell_2^n\alpha_2].
\]
Therefore
\[
\begin{aligned}
\eta_n(x\rhd^{[n]}y)
&=
[\ell_1^n,
\alpha_1\alpha_2^{-1}\ell_2^n\alpha_2]
\\
&=
[\ell_1^n,\alpha_1]
\rhd
[\ell_2^n,\alpha_2]
\\
&=
\eta_n(x)\rhd\eta_n(y).
\end{aligned}
\]
Similarly,
\[
\eta_n
\bigl(
x\,\overline{\rhd^{[n]}}\,y
\bigr)
=
\eta_n(x)\,\overline{\rhd}\,\eta_n(y).
\]
Thus $\eta_n$ is a quandle homomorphism.

It remains to prove that $\eta_n$ is bijective.
Let
\[
[\lambda,\alpha]\in LQ(K,\ast;P_n).
\]
After replacing the representative by a homotopic one, we may
assume that
\[
\lambda=\ell^n,
\]
where $\ell$ is a positive meridian loop based at $\alpha(0)$.
Define
\[
\xi_n([\ell^n,\alpha])
=
[\ell,\alpha].
\]

We verify that $\xi_n$ is well defined.
Suppose that
\[
[\ell^n,\alpha]
=
[(\ell')^n,\alpha']
\]
in $LQ(K,\ast;P_n)$.
Let $(H,K)$ be a homotopy realizing this equivalence, and put
\[
\beta(s)=K(0,s).
\]
After changing the base point along $\beta$, the homotopy $H$
implies
\[
[\beta^{-1}\ell^n\beta]
=
[(\ell')^n]
\]
in $\pi_1(A_K,\alpha'(0))$.
Since
\[
[\beta^{-1}\ell^n\beta]
=
[\beta^{-1}\ell\beta]^n
\]
and
\[
\pi_1(A_K,\alpha'(0))
\cong
\mathbb{Z}^2,
\]
the map
\[
g\longmapsto g^n
\]
is injective for $n\neq0$.
Hence
\[
[\beta^{-1}\ell\beta]
=
[\ell']
\]
in $\pi_1(A_K,\alpha'(0))$.
It follows that
\[
[\ell,\alpha]
=
[\ell',\alpha']
\]
in $LQ(K,\ast;P_1)$.
Thus $\xi_n$ is well defined.

By construction,
\[
\xi_n\circ\eta_n
=
\operatorname{id}_{LQ(K,\ast;P_1)}
\]
and
\[
\eta_n\circ\xi_n
=
\operatorname{id}_{LQ(K,\ast;P_n)}.
\]
Therefore $\eta_n$ is an isomorphism.

Combining this with Proposition~\ref{prop:knot-quandle}, we obtain
\[
LQ(K,\ast;P_n)
\cong
LQ(K,\ast;P_1)^{[n]}
\cong
Q(K,\ast)^{[n]}.
\]
\end{proof}

We now describe more general loop conditions on $A_K$.
Suppose that $K$ is oriented.
Let $\mu$ be a positive meridian and let $\lambda$ be the
preferred longitude of $K$.
Using the ordered basis $(\mu,\lambda)$, the set of free homotopy
classes of loops in
\[
A_K\cong S^1\times S^1
\]
is naturally identified with $\mathbb{Z}\times\mathbb{Z}$.
More precisely, the free homotopy class of a loop $\ell$ in $A_K$
corresponds to $(p,q)\in\mathbb{Z}\times\mathbb{Z}$ if $\ell$ is
freely homotopic to
\[
\mu^p\lambda^q.
\]

\begin{definition}
Let $S$ be a subset of $\mathbb{Z}\times\mathbb{Z}$.
Let $P_S$ be the set of loops in $A_K$ whose free homotopy
classes correspond to elements of $S$.
Then $P_S$ is a loop condition on $A_K$, which we call the
\textit{loop condition corresponding to $S$}.

We also write
\[
LQ(K,\ast;S)
=
LQ(K,\ast;P_S)
\]
and call it the \textit{knot loop quandle of $K$ with respect to
$S$}.
\end{definition}

For example,
\[
P_1=P_{\{(1,0)\}},
\qquad
P_n=P_{\{(n,0)\}}.
\]
Thus, for every subset
$S\subset\mathbb{Z}\times\mathbb{Z}$, we obtain a knot loop
quandle $LQ(K,\ast;S)$.

\begin{example}
Let $K$ be an oriented knot, and let
\[
S=\{(1,0),(-1,0)\}
\subset\mathbb{Z}\times\mathbb{Z}.
\]
Then the knot loop quandle
\[
LQ(K,\ast;S)
\]
is naturally isomorphic to the full knot quandle
\[
\widetilde{Q}(K,\ast).
\]
This follows by the same argument as in
Proposition~\ref{prop:knot-quandle}, allowing both positive and
negative meridional loops.
\end{example}

\section{Concluding remarks}
\label{sec:concluding-remarks}

The essential idea behind the loop fundamental quandle may be
understood by comparing it with Joyce's noose construction.
Recall that a noose in Joyce's construction is defined on
\[
N=D^2\cup T.
\]
At the level of the noose picture, our construction may be viewed
as retaining the boundary-loop data of the disk part rather than
requiring the disk itself.

More precisely, in the knot setting, after passing from the pair
$(S^3,K)$ to the exterior pair
\[
(E(K),A_K),
\]
the boundary of a meridian disk becomes a loop in $A_K$.
The loop fundamental quandle uses such a loop, together with a path
from its base point to the fixed base point $\ast$, as its basic
data.
Unlike Joyce's noose construction, however, the loop fundamental
quandle does not require the loop to extend to a disk satisfying
the noose condition.

Some familiar quandles arise naturally in this framework.
In Propositions~\ref{prop:knot-quandle} and
\ref{prop:power-knot-quandle}, we have seen that the knot quandle
$Q(K,\ast)$ is naturally isomorphic to
\[
LQ(K,\ast;P_1),
\]
and that the $n$-th power quandle $Q(K,\ast)^{[n]}$ is naturally
isomorphic to
\[
LQ(K,\ast;P_n).
\]
Thus the knot quandle and its power quandles arise as particular
cases of the knot loop quandle.

On the other hand, the present construction also allows loop
conditions that are not represented by Joyce nooses.
Suppose that a loop condition $P$ contains a loop
\[
\ell:I\longrightarrow A_K
\]
for which there is no map
\[
d:D^2\longrightarrow S^3
\]
such that
\[
d\circ\delta=\ell
\]
and
\[
d^{-1}(K)=\{O\}.
\]
Then an element of $LQ(K,\ast;P)$ represented by a pair
$[\ell,\alpha]$ has no corresponding representative given by a
Joyce noose with boundary loop $\ell$.
In this sense, the knot loop quandle provides a more flexible
framework than the one obtained directly from Joyce's
disk-based noose construction.

Although in this paper we have defined the knot loop quandle for
a knot in $S^3$, the construction extends naturally to
codimension-two submanifolds of arbitrary manifolds.
Let $K$ be a codimension-two submanifold of a manifold $M$, let
$N(K)$ be a tubular neighborhood of $K$, and put
\[
E(K)=M\setminus\operatorname{Int}N(K),
\qquad
A_K=\partial E(K).
\]
Fix a base point $\ast\in E(K)$.
For any loop condition $P$ on $A_K$, one may consider the loop
fundamental quandle
\[
LQ(E(K),A_K,\ast;P)
\]
as an analogue of the knot loop quandle.

In particular, when $K$ is a surface-knot in $\mathbb{R}^4$,
there are various natural choices of loop conditions on $A_K$.
Investigating the quandles arising from these different loop
conditions may provide further applications of the present
construction to higher-dimensional knot theory.

\section*{Tool Use}

The author used ChatGPT (OpenAI) to assist with English language
editing, organization of the manuscript, LaTeX formatting, and
the presentation of some mathematical arguments.
All mathematical statements and proofs were independently checked
by the author, who takes full responsibility for the contents of
the paper.


\begin{thebibliography}{9}

\bibitem{FennRourke92}
R. Fenn and C. Rourke,
Racks and links in codimension two,
\textit{J. Knot Theory Ramifications} {\bf 1} (1992), 343--406.

\bibitem{Joyce82}
D. Joyce,
A classifying invariant of knots, the knot quandle,
\textit{J. Pure Appl. Algebra} {\bf 23} (1982), 37--65.



\bibitem{Kamada07}
S.~Kamada,
Quandles with good involutions, their homologies and knot invariants,
in \textit{Intelligence of Low Dimensional Topology 2006},
eds. J.~S. Carter et al.,
pp.~101--108,
World Scientific Publishing Co., 2007.

\bibitem{KamadaOshiro10}
S.~Kamada and K.~Oshiro,
Homology groups of symmetric quandles and cocycle invariants of links
and surface-links,
\textit{Trans. Amer. Math. Soc.} {\bf 362} (2010), no.~10, 5501--5527.
arXiv:0902.4277.

\bibitem{Kamada14}
S.~Kamada,
Quandles and symmetric quandles for higher dimensional knots,
in \textit{Knots in Poland III -- Part III},
\textit{Banach Center Publ.} {\bf 103} (2014), 145--158.
arXiv:1403.0689.


\bibitem{Matveev82}
S.~V. Matveev,
Distributive groupoids in knot theory.
(Russian)
\textit{Mat. Sb. (N.S.)} {\bf 119 (161)} (1982), no.~1, 78--88;
\textit{Math. USSR-Sb.} {\bf 47} (1984), 73--83 (in English).


\bibitem{Szymik}
M.~Szymik,
Permutations, power operations, and the center of the category of racks,
\textit{Comm. Algebra} {\bf 46} (2018), no.~1, 230--240.

\bibitem{Ulrickson}
P.~Ulrickson,
Endofunctors of quandles and racks,
\textit{Comm. Algebra} {\bf 46} (2018), no.~7, 2964--2967.



\end{thebibliography}
\end{document}